\documentclass{amsart}

\usepackage[english]{babel}

\usepackage[letterpaper,margin=1in]{geometry}

\usepackage{enumitem}

\usepackage{amsmath, amssymb, amsthm}
\usepackage{nicematrix}
\usepackage{tikz}
\NiceMatrixOptions{code-for-first-row = \color{red},
code-for-first-col = \color{blue}
}

\newcommand{\Hess}{{\operatorname{Hess}}}
\newcommand{\Jac}{{\operatornamewithlimits{Jac}}}

\newcommand{\Vectorization}{{\operatorname{Vec}}}

\usepackage{graphicx}
\usepackage[colorlinks=true, allcolors=blue]{hyperref}
\usepackage{xcolor}

\newtheorem{theorem}{Theorem}[section]
\newtheorem{lemma}[theorem]{Lemma}
\newtheorem{proposition}[theorem]{Proposition}

\newtheorem{conjecture}[theorem]{Conjecture}

\theoremstyle{definition}
\newtheorem{definition}[theorem]{Definition}
\newtheorem{example}[theorem]{Example}
\newtheorem{question}[theorem]{Question}
\newtheorem{notation}[theorem]{Notations}

\theoremstyle{remark}
\newtheorem*{remark}{Remark}

\title{On the Structure of Second Jacobian Ideals}

\author{Fei Ye}
\address{Department of Mathematics and Computer Science, Queensborough Community College of CUNY, 222-05, 56th Avenue Bayside, NY 11364}
\address{
Department of Mathematics, CUNY Graduate Center, 365 Fifth Avenue, New York, NY 10016
}
\email{feye@qcc.cuny.edu}

\keywords{higher Jacobian matrices, higher Jacobian ideals, singularity, hypersurface, contact invariant}

\subjclass[2020]{14B05, 14E15, 14J17, 32S05, 32S25}

\thanks{
The author is partially supported by PSC-CUNY Award \# 67281-00 55. The author sincerely appreciates the referee's invaluable comments and suggestions, which have significantly enhanced the clarity and rigor of the paper.
}

\begin{document}

\begin{abstract}
We show that the second Jacobian ideal of a hypersurface can be decomposed such that a power of the Jacobian ideal becomes a factor.
As an application of the decomposition, we present an elementary proof establishing that the second Nash blow-up algebra of a hypersurface singularity is a contact invariant.
\end{abstract}

\maketitle

\section{Introduction}

Let $X$ be a complex variety of dimension $d$, $x \in X$ a point, and $x^{(k)} := \operatorname{Spec}\left(\mathcal{O}_{X, x}/\mathfrak{m}_x^{k+1}\right)$ its $k$-th infinitesimal neighborhood. For any point $x$ in the smooth locus $X_{\mathrm{sm}}$ of $X$, the Artinian subscheme $x^{(k)}$ of $X$ corresponds to a point $[x^{(k)}]$ in the Hilbert scheme $\operatorname{Hilb}_{\binom{d+k}{k}} (X)$ of $\binom{d+k}{k}$ points of $X$. 
T. Yasuda \cite{Yasuda2007} shows that the map 
$$\sigma_k: X_{\mathrm{sm}} \to \operatorname{Hilb}_{\binom{d+k}{k}} (X), \quad x\mapsto [x^{(k)}]$$ is a morphism of schemes and defines the $k$-th Nash blow-up of $X$ as $\pi_k: \mathrm{Nash}_k(X)\to X$, where $\mathrm{Nash}_k(X)$ is the closure of the graph $\Gamma_{\sigma_k} \subset X_{\mathrm{sm}} \times_\mathbb{C} \operatorname{\bf Hilb}_{\binom{d+k}{k}} (X)$ and $\pi_k$ is the first projection restricted to $\mathrm{Nash}_k(X)$.


The $k$-th Nash blow-ups and their properties in various contexts have been investigated in the recent literature, such as \cite{Yasuda2007}, \cite{Barajas2020}, and \cite{Le2024}.
A key finding is that the $k$-th Nash blow-up corresponds to the blow-up of the $k$-th Jacobian ideal. 
For a hypersurface, D. Duarte \cite{Duarte2017} explicitly characterizes the $k$-th Jacobian ideal as the ideal generated by the maximal minors of the $k$-th Jacobian matrix. 
It is worth mentioning that higher Jacobian matrices can also be defined for an ideal of holomorphic functions (see \cite{Barajas2020}). 
Higher Jacobian matrices have played important roles in the studies of modules of K\"ahler differentials of higher orders (\cite{Barajas2020}), arc schemes \cite{Dreau2024}, and motivic zeta functions \cite{Le2024}.

In this paper, we focus on the second Jacobian matrix of a hypersurface and show that the second Jacobian ideal can be decomposed into the sum of two ideals: a power of the Jacobian ideal and an ideal involving second-order partial derivatives of $F$.

The motivation of this project stems from a conjecture by N. Hussain, G. Ma, S. S.-T. Yau, and H. Zuo (\cite[Conjecture 1.5]{Hussain2023}). The conjecture was confirmed for $n=2$ and $k=2$ in \cite{Hussain2023}. In \cite{Ramirez2024}, we confirmed the conjecture for $n=3$ and $k=2$ using an explicit description of the second Jacobian ideal. After the completion of \cite{Ramirez2024}, we learned that \cite[Conjecture 1.5]{Hussain2023} has been confirmed in general by Q. T. Lê and T. Yasuda in \cite{Le2024} using Fitting ideals. 

In \cite{Ramirez2024}, we proposed a conjecture regarding the structure of the second Jacobian ideal and confirmed it for $n = 2$ and $n = 3$.

\begin{conjecture}[{\cite[Conjecture 2.8.]{Ramirez2024}}]\label{conj:decomposition}
    Let $F$ be a holomorphic function of $n$ variables and $K$ the $(n+1)$-by-$\binom{n+1}{2}$ submatrix of the second Jacobian $\Jac_2(F)$ consisting of the columns $\beta_{ab}$ with $0< a\le b \le n$. The ideal of maximal minors of $K$ is 
\[
\mathcal{J}_1(F)^{n-2}\mathcal{Q}(F),
\]
where $\mathcal{J}_1(F)$ is the Jacobian ideal of $F$,
$$\mathcal{Q}(F)= \big(Q_{ij;kl}(F)\big)_{1\le i, j, k,l\le n},$$ and 
$$Q_{ij;kl}(F)=\partial_{ik}F\partial_jF\partial_lF - \partial_{jk}F\partial_iF\partial_lF - \partial_{il}F\partial_jF\partial_kF + \partial_{jl}F\partial_iF\partial_kF.$$ 
\end{conjecture}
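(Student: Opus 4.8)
The plan is to turn the claimed ideal equality into a concrete statement about an explicit matrix over a polynomial ring and then verify two inclusions. Write $p_i := \partial_i F$ and $h_{ab} := \partial_{ab}F$, and recall the shape of the submatrix $K$ of $\Jac_2(F)$: its columns are indexed by the pairs $\beta_{ab}$, $1 \le a \le b \le n$; the row indexed $0$ records the Hessian entries $h_{ab}$, while the rows $1,\dots,n$ form the ``contraction-with-gradient'' block $L$ with $L_{i,(ab)} = \delta_{ia}p_b + \delta_{ib}p_a$. Since every assertion is a polynomial identity in the derivatives of $F$ and holds for $F$ generic, I would first pass to the polynomial ring $R = \mathbb{C}[p_1,\dots,p_n,(h_{ab})_{a\le b}]$ in which the $p_i$ and $h_{ab}$ are independent indeterminates; the statement becomes $I_{n+1}(K) = \mathfrak{p}^{\,n-2}\mathfrak{q}$, where $\mathfrak{p} = (p_1,\dots,p_n)$ and $\mathfrak{q} = (Q_{ij;kl})$. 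Two preliminary observations drive everything: a Laplace expansion of any maximal minor along row $0$ writes it as $\sum_{\beta\in S}\pm h_\beta\det L_{S\setminus\beta}$, and each $n\times n$ minor of $L$ is a single monomial in the $p_i$. The latter holds because, in the permutation expansion of $\det L_C$, every nonzero term matches each row $i$ to a column of $C$ containing $i$ and contributes the partner index; the resulting $p$-monomial is always $p^{\mu(C)}$, where $\mu(C)$ is the multiset of all indices occurring in $C$ with one copy of each of $1,\dots,n$ deleted, independently of the permutation.

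For the inclusion $I_{n+1}(K)\subseteq \mathfrak{p}^{\,n-2}\mathfrak{q}$, I would use the identity $Q_{ij;kl} = h_{ik}p_jp_l - h_{jk}p_ip_l - h_{il}p_jp_k + h_{jl}p_ip_k$ in the Plücker form $Q_{ij;kl} = (A_{ij})_k\, p_l - (A_{ij})_l\, p_k$ with $A_{ij} := \Hess(F)\,(e_i p_j - e_j p_i)$, which exhibits $\mathfrak{q}$ as generated by the $2\times 2$ minors of the pair of vectors $A_{ij}$ and $\nabla F$. Applying the Laplace expansion together with the monomial description of the cofactors, each maximal minor becomes $\sum_{\beta\in S}\pm h_\beta\, p^{\mu(S\setminus\beta)}$; the columns $\beta$ that differ by swapping a single index then pair up, and their two terms combine into the four-term alternating shape of a $Q_{ij;kl}$ multiplied by a common $p$-monomial of degree $n-2$. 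The upshot is that every maximal minor is a $\mathbb{C}$-linear combination $\sum_t p^{\gamma_t}Q_{i_tj_t;k_tl_t}$ with $|\gamma_t| = n-2$, hence lies in $\mathfrak{p}^{\,n-2}\mathfrak{q}$.

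For the reverse inclusion I would produce each generator $p^{\gamma}Q_{ij;kl}$, $|\gamma| = n-2$, explicitly as a maximal minor (or a triangular combination of such). The construction uses three ``core'' columns built from the indices $\{i,j,k,l\}$ which, together with row $0$, already realize $Q_{ij;kl}$ up to a unit, and $n-2$ ``padding'' columns, one for each index $s \in \{1,\dots,n\}$ not used by the core; each padding column is chosen so that its only selected entry in the block $L$ sits in row $s$, so that expanding along the fresh rows forces exactly one factor $p_{m_s}$, where the $m_s$ may be prescribed to match $\gamma$. Since there are exactly $n-2$ fresh indices, the counts match perfectly, and ordering the generators by their monomials $\gamma$ lets me invert the resulting triangular system to recover every $p^\gamma Q_{ij;kl}$.

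I expect the decisive difficulty to be the grouping step in the first inclusion: for an arbitrary column set $S$ the minor is a genuine multi-term combination of several $Q_{ij;kl}$ (as already happens for $n=3$, where a typical minor equals $p_3 Q_{12;13} - p_1 Q_{13;23}$), so one must show that the signs and cofactor monomials produced by the Laplace expansion always organize themselves, for every $S$, into alternating $Q$-shaped blocks with a common degree-$(n-2)$ monomial factor. I would control this by an induction on the number of distinct indices appearing in $S$ (equivalently, by a telescoping pairing of columns related by a single index transposition), reducing a general minor to minors supported on fewer indices and tracking the antisymmetrization that turns each adjacent pair into a $Q_{ij;kl}$. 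Establishing that this bookkeeping closes up uniformly in $n$, rather than verifying the two inclusions separately for each small $n$, is the crux of the argument.
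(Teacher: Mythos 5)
Your setup is sound and your two preliminary observations are correct and genuinely useful: every $n\times n$ minor of the gradient block is $\pm c\,p^{\mu(C)}$ for an integer $c$ and a monomial determined only by the column multiset, and homogeneity (degree $1$ in the Hessian entries, degree $n$ in the gradient entries) forces any membership in $\mathcal{J}_1(F)^{n-2}\mathcal{Q}(F)$ to be witnessed by a $\mathbb{C}$-linear combination $\sum_t\lambda_t\,p^{\gamma_t}Q_{i_tj_t;k_tl_t}$ with $|\gamma_t|=n-2$. But the step you yourself flag as the crux --- that after Laplace expansion along row $\beta_0$ the $n+1$ terms ``organize themselves into alternating $Q$-shaped blocks'' --- is asserted, not proved, and it is essentially the entire content of the statement. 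The obstruction is that the grouping is \emph{not} a pairing of columns inside the chosen set $S$: a single $Q_{ij;kl}$ contributes four Hessian monomials $h_{ik},h_{jk},h_{il},h_{jl}$, and for a typical $S$ the $Q$'s occurring in the decomposition have $h$-support sticking outside $\{h_\beta:\beta\in S\}$, with the extraneous terms cancelling \emph{between} different $Q$'s. Already for $n=3$ and $S=\{\beta_{11},\beta_{12},\beta_{23},\beta_{33}\}$ the minor equals $\tfrac12\bigl(p_3Q_{12;13}-p_1Q_{13;23}\bigr)$, where the $h_{13}$-terms of the two $Q$'s cancel even though $\beta_{13}\notin S$. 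This is exactly why the paper does not work inside a single minor: it pairs $M$ with a companion $N$ obtained by swapping indices between columns so that $\det(M)+\det(N)=\pm Q_{ij;kl}(F)\det(A)$ (Lemmas \ref{lem:ExpansionMaximalMinorA} and \ref{lem:ExpansionMaximalMinorB}), then disposes of $\det(N)$ by extracting a factor $f_j$ (Lemma \ref{lem:MinorReduction}) and inducting on the size of a core block isolated by Lemmas \ref{lem:blockUpperDiagonal} and \ref{lem:doubleindices}. Your ``telescoping pairing of columns related by a single index transposition'' is the same germ of an idea, but you have not shown that the recursion terminates, nor that the signs and integer coefficients (including the $\tfrac12$ on diagonal columns and the factors of $2$ arising from repeated indices) close up uniformly in $n$; without that, the first inclusion is unproved.

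The reverse inclusion has the same status. ``Three core columns plus $n-2$ padding columns, one per fresh row'' works cleanly only when the prescribed exponent vector $\gamma$ is disjoint from $\{i,j,k,l\}$; when some $j_a$ collides with a core index, the padding column $\beta_{i_aj_a}$ has a nonzero entry in a core row and the block-triangular structure you rely on breaks. The paper's Proposition \ref{prop:ExistenceMinors} spends Cases 2--3 (in particular Subcases 3.1--3.4) choosing a different core-column configuration for each collision pattern, and in several of them a single minor does not suffice: one needs $\det(M)+\det(N)$ together with a correction term $\tfrac12Q_{ij;ij}(F)\det(B)$ that must itself be recognized as lying in the ideal of maximal minors via the $c=2$ base case. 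A ``triangular system indexed by the monomials $\gamma$'' does not capture this, because the correction terms involve \emph{different generators} $Q$, not just different monomials, so you would need to order pairs $(\gamma,Q)$ and verify acyclicity of the resulting dependencies. In short, nothing you wrote is false and the architecture is compatible with the paper's, but both inclusions are reduced to combinatorial claims that are exactly the theorem and are left unestablished.
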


In this paper, we show that this conjecture holds true for any $n > 3$. 

The paper is organized as follows. 
In Section 2, we recall the definition of the second Jacobian matrix of a hypersurface and examine its structure. 
In Section 3, we present results on the minors of the second Jacobian matrix, which facilitates reductive calculations of these maximal minors. 
In Section 4, we show that Conjecture \ref{conj:decomposition} holds true.
In Section 5, we provide an elementary proof of \cite[Conjecture]{Hussain2023} for $k = 2$.

\section{second Jacobian matrices}\label{sec:higherJacobian}

Let $\mathbb{N}_{0}$ be the set of nonnegative integers and $\mathbb{N}_0^m$ the set of $m$-tuples in $\mathbb{N}_0$.

Define $\beta_i=(0, \ldots, 0, \overset{i\text{th}}{1},0\ldots, 0)$ and let $\beta_0$ be the zero vector. 
For a vector $\alpha=(a_1,\ldots, a_m)\in \mathbb{N}_{0}^m$, we denote $\alpha!=a_1!\cdots a_m!$, $\partial_i=\frac{\partial}{\partial x_i}$, and $\partial^\alpha=\partial_{i_1}^{a_{i_1}}\cdots \partial_{i_r}^{a_{i_r}}$, where $a_{i_1}$, $\cdots$, $a_{i_r}$ are the nonzero entries of $\alpha$.

Let $F$ be a holomorphic function of $n$ variables.  The second Jacobian matrix of $F$, denoted as $\Jac_2(F)$, is an extension of the Jacobian matrix. It consists of $n+1$ rows labeled as $\beta_0$, $\beta_1$, $\cdots$ $\beta_n$, and $n+\binom{n+1}{2}$ columns labeled as $\beta_{01}$, $\cdots, \beta_{0n}$, $\beta_{11}$, $\cdots$, $\beta_{1n}$, $\beta_{22}$, $\cdots$, $\beta_{(n-1)n}$, $\beta_{nn}$, where $\beta_{ij} :=\beta_i + \beta_j$. Both rows and columns are ordered in graded lexicographical order as listed above.

Given a matrix $M$, we denote the $(i, j)$ entry of $M$ as $M(i,j)$.

An index of a row (column) label is simply called a row (column) index.

\begin{definition}
    Let $F$ be a holomorphic function of $n$ variables.  The \textbf{second Jacobian matrix} $\Jac_2(F)$ of $F$ is the $(n+1)\times \left(n+\binom{n+1}{2}\right)$ matrix whose $(\beta_k, \beta_{ij})$ entry, where $i\le j$, is 
    \[
    {\Jac}_2(F)(\beta_k, \beta_{ij})
    =
    \begin{cases}
         \frac{1}{2}\partial_{ii}(F) & \text{ if } k=0 \text{ and } i=j\\
         \partial_{ij}(F) & \text{ if } k=0 \text{ but } i\neq j\\
         \partial_j(F) & \text{ if } k=i\ge 1,\\
         \partial_i(F) & \text{ if } k=j\ge 1, \text{ and } i>0\\
         0 & \text{otherwise}.
    \end{cases}
    \]

    The \textbf{second Jacobian ideal} $\mathcal{J}_2(F)$ of $F$ is the ideal generated by maximal minors of $\Jac_2(F)$.

    The \textbf{(first) Jacobian ideal} $\mathcal{J}_1(F)$ of $F$ is the ideal generated by $\partial_iF$, where $1\le i\le n$.
\end{definition}

\begin{remark}
    The above definition is adopted from \cite{Le2024} and \cite{Barajas2020}, which differs to the definition in \cite{Duarte2017} only in the $(\beta_{k}, \beta_{0k})$ entries.
    In \cite{Duarte2017}, the $(\beta_{k}, \beta_{0k})$ entries are $F$ instead of $0$. 
    It is clear that these two versions coincide modulo $F$.
\end{remark}

\begin{notation} For simplicity, we fix some notations. Let $F$ be a holomorphic function of $n$ variables $x_1, \dots, x_n$.

    We will denote $\partial_k F$ by $f_k$ and $\partial_{ij}F=\partial_i(\partial_jF)$ by $f_{ij}$ for any $1\le i,j, k\le n$.

    For $1\le i, j, k, l \le n$, we let
    \[Q_{ij;kl}(F) =  f_{ik}f_jf_l - f_{jk}f_if_l - f_{il}f_jf_k + f_{jl}f_if_k,\]
    and 
    \[\mathcal{Q}(F) = (Q_{ij;kl}(F))\]
    the ideal generated by $Q_{ij;kl}(F)$, where $1\le i,j,k,l\le n$.

    We use the hat notation $\widehat{\ast}$ to omit the label $\ast$ together with the associated delimiter or operator.
    
    For a matrix $M$, we denote by $M([a_1, \cdots, a_r], [b_1, \cdots, b_s])$ the submatrix consisting of the $(a_k,b_l)$ entries of $M$, where $a_k$ and $b_l$ are row labels and column labels of $M$ respectively. 
    If the set $\{a_1, \dots, a_r\}$ consists of all row labels of $M$, then we simply denote the submatrix by $M(: , [b_1, \cdots, b_s])$. Similarly, we denote by $M([a_1, \cdots, a_r], :)$ the submatrix of $M$ consisting of the rows $a_k$ and all columns.

    Regarding column labels, we always identify $\beta_{ji}$ with $\beta_{ij}$ if $j>i$.

    For a submatrix $D$ of $\Jac_2(F)$, given a row index $i$ of $D$, we denote by $m_i(D)$ is the frequency of $i$ appearing in the column labels of $D$. We will write $m_i(D)$ as $m_i$ when the context makes it clear and there is no risk of confusion.

    We denote that two matrices $A$ and $B$ are equivalent up to permutations on rows and columns by $A\sim_\sigma B$. For simplicity, we say $A$ and $B$ are \textbf{permutation equivalent}.
\end{notation}

\begin{remark}
    Note that $f_{ij} = f_{ji}$, which implies the following equalities
    \[Q_{ij;kl}(F) = 0 \text{ if } i=j \text{ or } k=l,\]
    \[Q_{ij;kl}(F)=-Q_{ji;kl}(F)=-Q_{ij;lk}(F)=Q_{ji;lk}(F)=Q_{kl;ij}(F).\]
    Therefore, the ideal $\mathcal{Q}(F)$ is generated by the set $\{Q_{ij;kl}(F) \mid 1\le i < j \le n, 1\le k < l\le n\}$.
\end{remark}

\begin{example}
    For $n=3$, the second Jacobian matrix $\Jac_2(F)$ is a $4\times 9$ matrix of holomorphic functions.
\[
\begin{pNiceMatrix}[first-row, first-col]
& \beta_{01} & \beta_{02} & \beta_{03} 
            & \beta_{11} & \beta_{12} & \beta_{13} 
                              & \beta_{22} & \beta_{23} 
                                                & \beta_{33}\\
    \beta_0 & f_1   & f_2          & f_3 
            & \dfrac{1}{2}f_{11}  & f_{12}  &  f_{13}
                                    & \dfrac{1}{2}f_{22}  &  f_{23}
                                                                   &  \dfrac{1}{2}f_{33}\\
    \beta_1 & 0   &               0              &  0 
            & f_{1}  & f_{2}  &  f_{3}
                                    & 0  &  0
                                                                   &  0\\
    \beta_2 & 0   &               0              &  0 
            & 0  & f_{1}  &  0
                                    & f_{2}  &  f_{3}
                                                                   &  0\\
    \beta_3 & 0   &               0             &  0 
            & 0  & 0  &  f_{1}
                                    & 0  &  f_{2}
                                                                   &  f_{3}\\
\end{pNiceMatrix}.
\]
\end{example}

Due to the nature of $\Jac_2(F)$, each column contains at most $3$ nonzero entries. It follows that square submatrices of $\Jac_2(F)$ have a nice structure, making it easier to compute their determinants.

Our first observation is that square submatrices are permutation equivalent to block upper triangular matrices.

\begin{example}
Let $F$ be a polynomial in $\mathbb{C}[z_1, \cdots, z_8]$. Consider the square submatrix 
\[
M = \Jac_2(F)\left(:, [\beta_{11}, \beta_{12}, \beta_{13}, \beta_{14}, \beta_{15}, \beta_{38}, \beta_{46}, \beta_{66}, \beta_{77}]\right).
\]

Since the indices $2$, $5$ and $8$ appear only once, the submatrix $B_1 = M([\beta_2, \beta_5, \beta_8], [\beta_{12}, \beta_{15}, \beta_{38}])$ is a diagonal matrix.

Denote the complement matrix of $B_1$ by $C_1$ which is obtained from $M$ by removing the rows and the columns of $M$ associated with $B_1$. Rearranging the rows and columns of $M$ so that $B_1$ is in the lower right corner leads to
\[
M \sim_\sigma\begin{pmatrix}
    C_1 & \ast \\
    0 & B_1
\end{pmatrix},
\]
and
\[
C_1 = M([\beta_0,\beta_1,\beta_3, \beta_4,\beta_6,\beta_7], [\beta_{11}, \beta_{13}, \beta_{14}, \beta_{46}, \beta_{66}, \beta_{77}] ).
\]

Note that $3$ appears only once as a column index of $C_1$. Let $B_2=C_1([\beta_{3}], [\beta_{13}])$ and 
\[
C_2=C_1([\beta_0,\beta_1, \beta_4,\beta_6,\beta_7], [\beta_{11}, \beta_{14}, \beta_{46}, \beta_{66}, \beta_{77}]).
\]
Then 
\[
C_1 \sim_\sigma\begin{pmatrix}
    C_2 & \ast \\
    0 & B_2
\end{pmatrix}.
\]
Because $7$ appears twice but only in one column label in $C_2$, we take $B_3 = C_2([\beta_7], [\beta_{77}])$ and $$D=C_2[\beta_0,\beta_1, \beta_4,\beta_6], [\beta_{11}, \beta_{14}, \beta_{46}, \beta_{66}]).$$
Rearranging the rows and the columns of $M$ so that $B_1$, $B_2$, $B_3$, and $D$ are in the diagonal from bottom to top leads to
\[
M \sim_\sigma\begin{pmatrix}
   D & \ast & \ast & \ast \\ 
   0 & B_3 & \ast & \ast \\
   0 & 0 & B_2 & \ast \\
   0 & 0 & 0 & B_1
\end{pmatrix} = \begin{pNiceMatrix}[first-row, first-col, margin, cell-space-limits=2pt, code-for-first-col = \color{blue}]
\CodeBefore
\rectanglecolor{blue!10}{7-7}{9-9}
\rectanglecolor{blue!30}{6-6}{6-6}
\rectanglecolor{blue!50}{5-5}{5-5}
\rectanglecolor{blue!60}{1-1}{4-4}
\Body
   &\beta_{11} & \beta_{14} & \beta_{46}  & \beta_{66}   & \beta_{77}  & \beta_{13}   & \beta_{12} &  \beta_{15} & \beta_{38}   \\
   \beta_0 & \frac{1}{2}\partial_{11}F & \partial_{14}F & \partial_{46}F & \frac{1}{2}\partial_{66}F & \frac{1}{2}\partial_{77}F & \partial_{13}F   & \partial_{12}F & \partial_{15}F & \partial_{38}F \\
   \beta_{1} & \partial_1F & \partial_4F & 0 & 0 & 0 & \partial_3F & \partial_2F & \partial_5F & 0  \\
   \beta_{4} & 0  & \partial_1F & \partial_6F & 0 & 0 & 0 & 0 & 0 & 0 \\
   \beta_{6} & 0  & 0 & \partial_4F & \partial_6F & 0 & 0 & 0 & 0 &  0\\
   \beta_{7} & 0 & 0 & 0 & 0 & \partial_7F &  0 & 0 & 0 & 0\\
   \beta_{3} & 0 & 0 & 0 & 0 & 0 & \partial_1F & 0 & 0 & \partial_8F \\
   \beta_{2} & 0 & 0 & 0 & 0 & 0 & 0 & \partial_1F & 0 & 0 \\
   \beta_{5} & 0 & 0 & 0 & 0 & 0 & 0 &0& \partial_1F & 0\\
   \beta_{8} & 0 & 0 & 0 & 0 & 0 & 0 & 0 & 0 & \partial_3F \\
\end{pNiceMatrix}.
\]
Notice that each nonzero row index of $D$ appears in at least two column labels and each column index of $D$ is also a row index of $D$. For example, the row index $1$ appears in the column labels $\beta_{11}$ and $\beta_{14}$ and the column index $4$ is also a row index.
\end{example}

By generalizing the construction from the example, we obtain the following result, which will be used inductively in later proofs.

\begin{lemma}\label{lem:blockUpperDiagonal}
Let $S\subseteq \{1, \dots, n\}$ be a subset of cardinality $r$ and $M$ an $(r+1)$-by-$(r+1)$ submatrix of $\Jac_2(F)$, where the set of column indices of $M$ is $S$ and the set of row indices of $M$ is $\{0\}\cup S$. Then the matrix $M$ is permutation equivalent to a block upper triangular matrix
\[
M \sim_\sigma
\begin{pmatrix}
   D & \ast & \ast & \ast \\ 
   0 & B_m & \ast & \ast \\
   0 & 0 & \ddots & \ast \\
   0 & 0 & 0 & B_1
\end{pmatrix}
\]
where $B_i$ are diagonal square matrices (could be $0$-dimensional) and $D$ is a square matrix obtained by removing the rows and the columns associated with $B_i$ for $i=1, \dots, m$. 
   Moreover, each nonzero row index of $D$ appears in at least two column labels of $D$ and each column index of $D$ is also a row index of $D$.  
\end{lemma}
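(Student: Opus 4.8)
The plan is to induct, proving a slightly more general statement from which the lemma is immediate. I generalize the hypothesis to: $M$ is a square submatrix of $\Jac_2(F)$ whose rows are $\beta_0$ together with $\beta_s$ for $s$ in some $T\subseteq\{1,\dots,n\}$, all of whose column labels carry indices in $T$, and which has exactly $|T|+1$ columns. The lemma is the special case $T=S$. Everything is driven by one fact read off the definition of $\Jac_2(F)$: for $s\ge 1$ the entry ${\Jac}_2(F)(\beta_s,\beta_{ij})$ vanishes unless $s\in\{i,j\}$, so the row $\beta_s$ is supported only on those columns whose label contains $s$, while $\beta_0$ is unconstrained.

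The reduction step removes one index at a time. Suppose $m_s(M)\le 1$ for some $s\in T$. If $m_s=1$, let $c$ be the unique column whose label contains $s$; the support observation makes $\beta_s$ zero in every column but $c$, so moving $\beta_s$ to the last row and $c$ to the last column gives
\[
M\sim_\sigma\begin{pmatrix} M' & \ast\\ 0 & (a)\end{pmatrix},\qquad (a)={\Jac}_2(F)(\beta_s,c),
\]
with $M'$ obtained by deleting that row and column. Since $c$ was the only column containing $s$, the labels of $M'$ carry indices in $T\setminus\{s\}$, and $M'$ has $|T\setminus\{s\}|+1$ columns, so $M'$ satisfies the generalized hypothesis. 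If $m_s=0$ then $\beta_s$ is an identically zero row; pairing it with any surviving column produces the same block shape with $(a)=0$, and again $M'$ satisfies the hypothesis. Either way, induction on $|T|$ finishes the step.

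Iterating and grouping each pass—an inclusion-maximal family of rows with $m_s\le 1$ matched to pairwise distinct columns—into one block reproduces, from the bottom up, exactly the displayed block upper triangular form; the rows of a single pass are supported in distinct columns (a zero row nowhere, an $m_s=1$ row only in its matched column), so each $B_i$ is diagonal. The process stops precisely when every surviving nonzero row index has $m_s\ge 2$, and the remaining matrix is $D$. Two invariants, both immediate from the reduction, deliver the moreover clause: we only ever delete a sparse row $\beta_s$ together with all columns containing $s$, so throughout the run the column indices stay within the surviving nonzero row indices, giving ``each column index of $D$ is a row index of $D$''; and the halting condition is verbatim ``each nonzero row index of $D$ appears in at least two column labels.'' A short count—the number of columns always exceeds the number of sparse rows by one—shows the column indices can never outrun the surviving sparse rows, so the two invariants are consistent and $D$ is well defined.

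The main obstacle is the degenerate configuration in which two indices $s,t$ each occur in a single common column $\beta_{st}$: only one of $\beta_s,\beta_t$ can be matched to $\beta_{st}$, and the other must later be peeled as a zero diagonal block against a leftover column. Checking that such a leftover column is always available—this is exactly where the count ``columns $=|T|+1$'' enters—and that the resulting blocks stay diagonal while $D$ retains both invariants is the one place requiring care. Modeling $M$ by the multigraph on $T$ whose edges are its columns (with $\beta_{ss}$ a loop) and peeling vertices lying in at most one column makes the termination and the two invariants transparent, leaving only these routine verifications.
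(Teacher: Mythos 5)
Your strategy is essentially the paper's: iteratively split off diagonal blocks coming from sparse indices and let $D$ be the residue. Your one-index-at-a-time induction even handles the degenerate configuration you flag (two indices $s,t$ whose only occurrence is the shared column $\beta_{st}$) more cleanly than a batch construction would. But there is a genuine gap in your termination condition. You peel on ``$m_s\le 1$'' and halt when every surviving index has $m_s\ge 2$, then assert that this halting condition is ``verbatim'' the moreover clause. It is not: $m_s$ is the \emph{frequency} of $s$ in the column labels, and the diagonal label $\beta_{ss}$ contributes $2$ to $m_s$ (this convention is forced by the identity $\sum_a m_a=2(r+1)$ on which Lemma \ref{lem:doubleindices} depends). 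Hence an index $s$ whose only occurrence is the single column $\beta_{ss}$ has $m_s=2$, is never peeled by your reduction step, and yet appears in only \emph{one} column label of $D$, so the moreover clause fails. This case is not vacuous: in the worked example in $\mathbb{C}[z_1,\dots,z_8]$ preceding the lemma, the index $7$ occurs only in $\beta_{77}$, survives every $m\le 1$ pass, and must be split off as the extra $1\times 1$ block $(\partial_7F)$; the paper's proof devotes its entire Step 2 to exactly this situation. If $D$ retains such an index, Lemma \ref{lem:doubleindices} and the induction in Proposition \ref{prop:CalcuationMinors} that consume the moreover clause no longer apply.

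The repair is small and is already latent in your closing multigraph remark: the correct peeling condition is ``$s$ lies in at most one column label,'' not ``$m_s\le 1$.'' Since the row $\beta_s$ ($s\ge 1$) is supported exactly on the columns whose label contains $s$, a row whose index lies only in $\beta_{ss}$ still yields a $1\times 1$ diagonal block $(f_s)$, and with this condition the halting criterion genuinely becomes the stated moreover clause. You should state this explicitly rather than leaving it implicit in the graph picture, and you should also record that pairing an $m_s=0$ row with an arbitrary leftover column can lower other indices' counts and create new sparse indices; this is harmless for termination, but it means the two invariants must be verified as loop invariants of the peeling process rather than read off at the end.
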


\begin{proof}

We will construct a block upper triangular matrix permutation equivalent to $M$ through a reductive process.

\paragraph{\textbf{Step 1.}}
Suppose that $S$ has a partition $S=U\cup V$, where 
each index in $U$ appears only once in the column labels of $M$ and each index in $V$ appears at least twice in column labels of $M$. 

\paragraph{\textbf{Case 1.1}}
Suppose that $U$ is empty. Then $M$ is a square matrix in which each nonzero row index appears at least twice in column labels of $M$. Therefore, we let $C=M$. 

\paragraph{\textbf{Case 1.2}}
Suppose $U$ is nonempty. Let $U_1 = U = \{u_1, \dots, u_a\}$ and
\[
B_1 = M([\beta_{u_1}, \dots, \beta_{u_a}],[\beta_{u_1v_1}, \dots, \beta_{u_av_a}]),
\]
where $u_i\in U_1$ and $u_1<u_2<\cdots<u_a$. Let $V_1=\{v_1, \dots, v_a\}$. Since $u_i\in U$ appears only once in the column labels of $M$, it follows that $U_1\cap V_1 =\emptyset$ and $V_1\subset V$. Then 
\[B_1(\beta_{u_i}, \beta_{u_jv_j})=  f_{v_j} \text{ if } i=j, ~\text{or}~ B_1(\beta_{u_i}, \beta_{u_jv_j})= 0 \text{ otherwise}.
\]
It follows that $B_1$ is a diagonal matrix whose diagonal entries are $f_{v}$, $v\in V_1$.
Denote by $C$ the complement matrix obtained from $M$ by removing the rows and columns associated with $B_1$. The set of row indices of $C$ is $(\{0\}\cup S)\setminus U_1 = \{0\}\cup V$ and the set of column indices of $C$ is $S\setminus (U_1 \cup V_1) \subset V$.
Therefore, for any $u_i\in U_1$, the entry $M(\beta_{u_i}, \beta_{x y}) = 0$ if $\beta_{x y}$ is a column label of $C$.
Rearranging the rows and columns of $M$ so that $B_1$ is in the lower right corner leads to
\[
M \sim_\sigma\begin{pmatrix}
    C & \ast \\
    0 & B_1
\end{pmatrix}.
\]


Since $S$ is finite, repeating the aforementioned argument for $C$ and always denoting the top diagonal block as $C$ results in the following 
\[
M \sim_\sigma\begin{pmatrix}
    C & \ast & \ast & \ast\\
    0 & B_{m-1} & \ast & \ast\\
    0 & 0 & \ddots & \ast\\
    0 & 0 & 0 & B_1 \\
\end{pmatrix},
\]
where $B_i$ are diagonal matrices and $C$ is a square submatrix.

From the construction, we see that each nonzero row index of $C$ appears at least twice in  column labels of $C$ and each column index of $C$ is also a row index of $C$.

\paragraph{\textbf{Step 2.}}
We still denote by $S$ the set of nonzero row indices of $C$. Suppose $S$ has a partition $S = W \cup V$ such that each index in $W$ appears twice but only in one column label and each index in $V$ appears in at least two column labels.

\paragraph{\textbf{Case 2.1}}
Suppose that $W$ is empty. Then $C$ is a square submatrix in which each nonzero row index appears in at least two column labels. We let $D = C$ and obtain a block upper triangular matrix permutation equivalent to $M$.

\paragraph{\textbf{Case 2.2}}
Suppose $W$ is nonempty. Let 
\[
B_m =\begin{pmatrix}
    M(\beta_{w}, \beta_{ww})
\end{pmatrix}_{w\in W}.
\]
Then $B_m$ is a diagonal matrix whose diagonals are $f_w$ with $w\in W$. Let $D$ be the complement matrix obtained from $C$ by removing the rows and the columns associated with all $B_m$. Since $u_i\neq w_j$ for any $u_i \in S\setminus W$ and $w_j\in W$, $C(\beta_{w_i},\beta_{xy}) = 0$ for any column label $\beta_{xy}$ of $D$. Therefore, rearranging the rows and the columns of $M$ can transform $C$ into a block upper diagonal matrix with $D$ and $B_m$ on the diagonal. 
Together with the construction in Step 1, this leads to
\[
M \sim_\sigma\begin{pmatrix}
    D & \ast & \ast & \ast\\
    0 & B_m & \ast & \ast\\
    0 & 0 & \ddots & \ast\\
    0 & 0 & 0 & B_1 \\
\end{pmatrix},
\]
where $B_i$ are diagonal matrices and $D$ is a square submatrix. Moreover, from the construction, it is clear that each nonzero row index of $D$ appears in at least two column labels of $D$ and each column index of $D$ is also a row index of $D$.
\end{proof}

We end this section with an observation on column labels of the submatrix $D$ appearing in Lemma \ref{lem:blockUpperDiagonal}.

\begin{lemma}\label{lem:doubleindices}
    Let $D$ be a $(r+1)$-by-$(r+1)$ submatrix of $\Jac_2(F)$ that contains the row $\beta_0$. 
    Assume that each nonzero row index of $D$ appears in at least two column labels of $D$ and that each column index of $D$ is also a nonzero row index of $D$.
    Then, either there is one row index appearing four times as a column index, or there are two row indices that each appears three times as column indices. In either case, each of the remaining nonzero row indices appears exactly twice as a column index.
    Moreover, $r\ge 2$.
\end{lemma}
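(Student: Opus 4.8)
The plan is to prove this by a double-counting argument on the multiplicities $m_i(D)$. First I would record the structural consequences of the two hypotheses. Since every column index of $D$ must be a \emph{nonzero} row index, no column label of $D$ can be of the form $\beta_{0k}$; hence every column label is $\beta_{ab}$ with $1 \le a \le b \le n$, and both $a$ and $b$ lie among the $r$ nonzero row indices of $D$.

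Next I would compute the total multiplicity $\sum_i m_i(D)$, summed over the $r$ nonzero row indices $i$. The key observation is that each of the $r+1$ columns contributes exactly $2$ to this sum: a label $\beta_{ab}$ with $a < b$ adds one to $m_a(D)$ and one to $m_b(D)$, while a diagonal label $\beta_{aa}$ adds $2$ to $m_a(D)$ (the index $a$ occurs twice in $\beta_{aa} = 2\beta_a$). Therefore $\sum_i m_i(D) = 2(r+1) = 2r + 2$.

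The conclusion then follows from a pigeonhole count. By the first hypothesis, $m_i(D) \ge 2$ for each of the $r$ indices, so $\sum_i \bigl(m_i(D) - 2\bigr) = (2r + 2) - 2r = 2$. Writing $2$ as a sum of $r$ nonnegative integers $m_i(D) - 2$ leaves exactly two possibilities: one term equals $2$ and the rest vanish (one index with $m_i(D) = 4$), or two terms equal $1$ and the rest vanish (two indices with $m_i(D) = 3$); in either case the remaining nonzero row indices satisfy $m_i(D) = 2$. Finally, to see $r \ge 2$, I would note that the $r+1$ columns are \emph{distinct} labels chosen from the $\binom{r+1}{2}$ possible labels $\beta_{ab}$ with $1 \le a \le b \le n$ built from the $r$ available row indices, so $r + 1 \le \binom{r+1}{2}$, which forces $r \ge 2$.

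I do not expect a serious obstacle here; the argument is elementary counting. The only points requiring care are the bookkeeping conventions---correctly attributing multiplicity $2$ to a diagonal label $\beta_{aa}$, and invoking the second hypothesis to exclude the index $0$ from the columns---both of which can be checked against the worked example preceding the lemma, where the column labels $\beta_{11}, \beta_{14}, \beta_{46}, \beta_{66}$ give $m_1(D) = m_6(D) = 3$ and $m_4(D) = 2$, realizing the second case.
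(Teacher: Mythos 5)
Your proof is correct and follows essentially the same double-counting argument as the paper: both rest on the identity $\sum_i m_i(D) = 2(r+1)$ together with the lower bound $m_i(D) \ge 2$, though your reformulation via $\sum_i (m_i(D)-2) = 2$ is a slightly tidier way to extract the two cases, and your $r+1 \le \binom{r+1}{2}$ count for $r \ge 2$ is a clean alternative to the paper's observation that the high-multiplicity index must occupy at least three distinct column labels.
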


\begin{proof}
Let $S$ be the set of nonzero row indices of $D$. For each $i\in S$, recall that $m_i$ is the frequency of $i$ appearing in the column labels of $D$. Since the dimension of $D$ is $r+1$, the cardinality of $S$ is $r$. Because there are $r+1$ column labels and each has two indices, we have
\[\sum_{a\in S}m_a = 2(r+1).\]
Since $m_a \ge 2$ for all $a\in S$, for any $i, j \in S$ with $i\neq j$, we have
\[\sum_{a \in S, a \neq i} m_a \ge 2(r - 1) \quad \text{and} \quad \sum_{a \in S, a \neq i, j} m_a \ge 2(r - 2).\]
Moreover, the equalities hold only if the summands $m_a=2$.
Then, for each $i\in S$, we have
\[
2\le m_i = \sum_{a\in S}m_a - \sum_{a \in S, a \ne i}m_a \le 4.\]
Because the cardinality of $S$ is $r$, there must be at least one $i$ such that $m_i>2$. Otherwise, there would be a contradiction $2(r+1) = \sum\limits_{a\in S}m_a = 2r $.

If there is an index $i\in S$ such that $m_i = 4$, then 
\[\sum_{a\in S, a\ne i} m_a = 
2(r-1).\]
For any $j\in S\setminus\{i\}$, we then have
\[2\le m_j = \sum_{a \in S, a \ne i} m_a - \sum_{a\in S, a\ne i, j} m_a \le 2\]
which implies that $m_j=2$ for $j\in S\setminus\{i\}$.
Since $m_i=4$ and a column label consists of two indices, the index $i$ must appear in at least three different column labels, which implies $r \ge 2$.

If there is an index $i\in S$ such that $m_i=3$, then
\[2(r-1)\le \sum_{a \in S, a \ne i}m_a = 
2r - 1.\]
Therefore, there must be another index $j\in S\setminus \{i\}$ such that $3\le m_j \le 4$. By Case 1, $m_j\ne 4$. Then it follows that $m_i=m_j=3$. Similar to Case 1, $m_a=2$ for any $a \in S\setminus\{i,j\}$. Furthermore, the indices $i$ and $j$ must appear in at least three different column labels, which implies $r \ge 2$.

\end{proof}

\section{Some Results on Minors}

In this section, we present some results on minors that will help us to calculate maximal minors.

\begin{lemma}[{\cite[Lemma 2.3]{Ramirez2024}}]\label{lem:detSubmatrices}
Let $F$ be a holomorphic function of $n$ variables and $S$ a square submatrix of $\Jac_2(F)$ consisting of the rows $\beta_{i_1}$, $\dots$, $\beta_{i_r}$ and the columns $\beta_{i_1j_1}$, $\dots$, $\beta_{i_rj_r}$ with $1\le i_1< \cdots <i_r \le n$. If $1\le j_1\le j_2\le\cdots\le j_r \le n$, then
\[\det(S)= \prod_{a=1}^r f_{j_a}.\]
\end{lemma}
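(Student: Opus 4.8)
The plan is to expand $\det(S)$ over permutations and show that only the identity permutation contributes a nonzero term. Write $S(a,b)$ for the entry of $S$ in its $a$-th row (labelled $\beta_{i_a}$) and $b$-th column (labelled $\beta_{i_b j_b}$). First I would record the diagonal entries. The entry $S(a,a)={\Jac}_2(F)(\beta_{i_a},\beta_{i_a j_a})$ is computed case by case: if $i_a\le j_a$ the label is already in standard form and $i_a$ is its first index, so the entry is $f_{j_a}$; if $i_a>j_a$ the standard label is $\beta_{j_a i_a}$ and $i_a$ is its second index (with first index $j_a>0$), so the entry is again $f_{j_a}$; and if $i_a=j_a$ it is $f_{i_a}=f_{j_a}$. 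Hence every diagonal entry equals $f_{j_a}$, and it suffices to show that
\[
\det(S)=\operatorname{sgn}(\id)\prod_{a=1}^r S(a,a)=\prod_{a=1}^r f_{j_a},
\]
that is, that $\prod_{a} S(a,\sigma(a))=0$ for every $\sigma\ne\id$.

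Next I would analyze the off-diagonal entries. For $a\ne b$ the entry $S(a,b)$ is nonzero exactly when the row index $i_a$ occurs in the column label $\beta_{i_b j_b}$; since the $i_c$ are pairwise distinct we have $i_a\ne i_b$, so nonvanishing forces $i_a=j_b$ (and then $S(a,b)=f_{i_b}$). Now suppose some $\sigma\ne\id$ gives a nonzero product. Then $\sigma$ has a cycle $a_1\to a_2\to\cdots\to a_k\to a_1$ of length $k\ge 2$, and nonvanishing of each factor $S(a_t,a_{t+1})$ yields $i_{a_t}=j_{a_{t+1}}$ for all $t$ (indices taken mod $k$). Reading these relations as an equality of multisets gives
\[
\{\, i_{a_1},\dots,i_{a_k}\,\}=\{\, j_{a_1},\dots,j_{a_k}\,\}.
\]

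Finally I would exploit the orderings. Let $c_1<\cdots<c_k$ be the cycle positions sorted increasingly; the strict order on row indices gives $i_{c_1}<\cdots<i_{c_k}$, and the nondecreasing order on the $j$'s gives $j_{c_1}\le\cdots\le j_{c_k}$. The left-hand multiset above consists of $k$ distinct values, hence so does the right-hand one, which forces $j_{c_1}<\cdots<j_{c_k}$ and therefore $j_{c_t}=i_{c_t}$ for every $t$; that is, $j_c=i_c$ for every cycle element $c$. But then, taking $c=a_2$, the column labelled $\beta_{i_c i_c}$ has its unique nonzero entry in the diagonal position $(c,c)$, so the off-diagonal entry $S(a_1,a_2)$ must vanish, contradicting the nonvanishing of the cyclic product. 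Hence no $\sigma\ne\id$ survives and the formula follows. The main obstacle is precisely the mixed situation in which columns with $i_a\le j_a$ and columns with $i_a>j_a$ coexist: then $S$ need not be triangular in any single orientation, so one cannot merely invoke triangularity, and the multiset-with-ordering argument is exactly what excludes the stray cyclic contributions.
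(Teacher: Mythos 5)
Your argument is correct: the case analysis of the diagonal entries, the observation that a nonzero off-diagonal entry $S(a,b)$ forces $i_a=j_b$, and the multiset-plus-ordering argument that kills every nontrivial cycle of a permutation are all sound, and you correctly isolate the role of the hypotheses $i_1<\cdots<i_r$ and $j_1\le\cdots\le j_r$ (without the latter the conclusion genuinely fails, e.g.\ columns $\beta_{12},\beta_{23},\beta_{13}$ against rows $\beta_1,\beta_2,\beta_3$ give $2f_1f_2f_3$). Note that this paper does not prove the lemma at all --- it imports it by citation from \cite{Ramirez2024} --- so there is no in-text proof to compare against; your write-up is a valid self-contained substitute. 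One small remark: your cycle analysis in fact shows that the digraph of nonzero off-diagonal entries is acyclic, so $S$ is permutation-equivalent to a triangular matrix with diagonal $f_{j_1},\dots,f_{j_r}$; this gives a slightly slicker packaging of the same idea and is closer in spirit to the block-triangular reductions the paper uses elsewhere (Lemma \ref{lem:blockUpperDiagonal}).
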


\begin{lemma}\label{lem:degenerateSubmatrix}
    Let $M$ be a $(n+1)$-by-$(n+1)$ submatrix of $\Jac_2(F)$ consisting of the columns $\beta_{i_1j_1}$, $\dots$, $\beta_{i_{n+1}j_{n+1}}$, where $U:=\bigcup\limits_{a=1}^{n+1}\left(\{i_a\}\cup\{j_a\}\right)\subseteq\{1, \dots, n\}.$    
    If $|U| < n$, then $\det(M)=0$.
\end{lemma}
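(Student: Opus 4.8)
The plan is to exhibit a zero row in $M$ and conclude that its determinant vanishes. First I would observe that the hypothesis $U \subseteq \{1, \dots, n\}$ forces every column index $i_a, j_a$ to lie in $\{1, \dots, n\}$; in particular, none of the chosen columns is of the form $\beta_{0k}$. Consequently, for each row $\beta_k$ with $k \ge 1$, the defining formula for $\Jac_2(F)$ shows that the entry $M(\beta_k, \beta_{i_a j_a})$ can be nonzero only when $k = i_a$ or $k = j_a$, that is, only when $k$ belongs to $U$.

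Next I would invoke the cardinality hypothesis. Since $M$ is $(n+1)$-by-$(n+1)$, it necessarily uses all $n+1$ rows $\beta_0, \beta_1, \dots, \beta_n$ of $\Jac_2(F)$. Because $|U| < n$, the complement $\{1, \dots, n\} \setminus U$ is nonempty, so I may choose an index $k_0 \in \{1, \dots, n\} \setminus U$. By the observation of the previous paragraph, every entry of the row $\beta_{k_0}$ equals $0$, since $k_0$ appears in none of the column labels $\beta_{i_a j_a}$.

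Having produced a zero row, I would conclude $\det(M) = 0$. Equivalently, the argument can be phrased in terms of rank: the only rows of $M$ that can fail to be identically zero are $\beta_0$ together with $\beta_k$ for $k \in U$, giving at most $|U| + 1 \le n < n+1$ nonzero rows, so $M$ is rank-deficient and hence singular. I do not anticipate any genuine obstacle here: the entire content lies in correctly reading off, from the definition of $\Jac_2(F)$, which entries in each of the rows $\beta_1, \dots, \beta_n$ can be nonzero, and the counting step is immediate once the inequality $|U| < n$ is in hand.
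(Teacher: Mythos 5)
Your argument is correct and is essentially the paper's own proof: both pick an index $k_0 \in \{1,\dots,n\}\setminus U$ (nonempty because $|U|<n$) and observe that the row $\beta_{k_0}$ of $M$ is identically zero, whence $\det(M)=0$. Your version merely spells out the reading of the definition of $\Jac_2(F)$ in more detail.
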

\begin{proof}
Suppose that $i\in \{1, \dots, n\}\setminus U$. Then the row $\beta_i$ is a zero vector. Therefore, $\det(M)=0$.
\end{proof}

Consider the submatrix
\[R=\Jac_2(F)\left([\beta_0, \beta_i, \beta_j, \beta_k, \beta_l, \beta_{u_1},\dots, \beta_{u_{r-4}}], [\beta_{11}, \dots, \beta_{1n}, \beta_{22}, \dots, \beta_{(n-1)n}, \beta_{nn}]\right),\]
where the row labels are all different.
Let $S=\{i,j,k,l,u_1, \cdots, u_{r-4}\}$ be the set of nonzero row indices of $R$. 

\begin{lemma}\label{lem:ExpansionMaximalMinorA}
Consider the $(r+1)$-by-$(r+1)$ matrices
\[M=R(:, [\beta_{ik}, \beta_{jk}, \beta_{jl}, \beta_{i_1j_1}, \dots, \beta_{i_{r-2}j_{r-2}}])\]
    and
    \[N=R(:, [\beta_{il}, \beta_{jj}, \beta_{kk}, \beta_{i_1j_1}, \dots, \beta_{i_{r-2}j_{r-2}}]),\]
    where $U:=\bigcup\limits_{a=1}^{r-2}\left(\{i_a\}\cup\{j_a\}\right) \subseteq S\setminus\{j,k\}$.
    Then
    \[\det(M) + \det(N) = -Q_{ij;kl}(F)\det(A),\]
    where $A$ is the submatrix obtained from $M$ by removing the rows $\beta_0$, $\beta_j$, $\beta_k$ and the columns $\beta_{ik}$, $\beta_{jk}$, $\beta_{jl}$. 
\end{lemma}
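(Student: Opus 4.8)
The plan is to evaluate both $\det(M)$ and $\det(N)$ by a generalized Laplace expansion along the three rows $\beta_0,\beta_j,\beta_k$, and to show that every term other than a single distinguished one either vanishes identically or cancels between the two determinants. The engine of the whole argument is the hypothesis $U\subseteq S\setminus\{j,k\}$: it guarantees that none of the shared columns $\beta_{i_1j_1},\dots,\beta_{i_{r-2}j_{r-2}}$ carries the index $j$ or $k$. Consequently, in $M$ the row $\beta_j$ is supported only on $\{\beta_{jk},\beta_{jl}\}$ and $\beta_k$ only on $\{\beta_{ik},\beta_{jk}\}$, while in $N$ the row $\beta_j$ is supported only on $\beta_{jj}$ and $\beta_k$ only on $\beta_{kk}$. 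Writing the expansion as a sum over $3$-element column sets $J$, with complementary rows $\beta_i,\beta_l,\beta_{u_1},\dots,\beta_{u_{r-4}}$, the $3\times3$ factor indexed by the rows $\beta_0,\beta_j,\beta_k$ vanishes unless $J$ contains at least two of the three special columns, since otherwise the restrictions of $\beta_j$ and $\beta_k$ to $J$ are two linearly dependent vectors. Moreover, when $J$ consists of exactly the three special columns, the complementary block is $A$ for both $M$ and $N$ (both inherit the same rows and shared columns from $\Jac_2(F)$).

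Next I would isolate the distinguished term, where $J$ is the full set of special columns. For $M$ the relevant $3\times3$ minor is
\[
\det\bigl(M[\{\beta_0,\beta_j,\beta_k\},\{\beta_{ik},\beta_{jk},\beta_{jl}\}]\bigr)=\det\begin{pmatrix} f_{ik}&f_{jk}&f_{jl}\\ 0&f_k&f_l\\ f_i&f_j&0\end{pmatrix}=-f_{ik}f_jf_l+f_{jk}f_if_l-f_{jl}f_if_k,
\]
while for $N$ the corresponding minor is the upper-triangular product $f_{il}f_jf_k$. Their sum is exactly $-Q_{ij;kl}(F)$. Multiplied by the complementary factor $\det(A)$ and the common Laplace sign, this yields the claimed right-hand side, provided that Laplace sign equals $+1$.

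Then I would dispose of the mixed terms, where $J$ contains two special columns and one shared column $c$, so that $J^c$ contains exactly one special column. If that column is $\beta_{jk}$, then it restricts to the zero vector on the rows $\beta_i,\beta_l,\beta_{u_\ast}$ (these indices avoid $j$ and $k$), so the complementary minor vanishes. The two surviving families, with $\beta_{ik}$ or $\beta_{jl}$ in $J^c$, expand through the unique nonzero entry of that column (in row $\beta_i$, resp.\ $\beta_l$) into a cofactor of $A$; summing over $c$ reassembles a full determinant, producing the expressions $f_if_jf_k\det(A')$ and $f_jf_kf_l\det(A'')$, where $A'$ and $A''$ denote $A$ with its row $\beta_l$, resp.\ $\beta_i$, replaced by the restriction of row $\beta_0$ to the shared columns. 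The identical computation for $N$, now using the \emph{two} nonzero entries of its special column $\beta_{il}$ in rows $\beta_i$ and $\beta_l$, produces the very same two expressions $f_if_jf_k\det(A')$ and $f_jf_kf_l\det(A'')$. Thus each extra contribution occurs once in $\det(M)$ and once in $\det(N)$, and the identity follows once these are shown to enter with opposite signs.

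The hard part will be precisely this sign bookkeeping: verifying that the distinguished Laplace sign is $+1$ and that each of the two extra contributions enters $\det(M)$ and $\det(N)$ with opposite signs. I would handle it by fixing the graded-lexicographic orderings of rows and columns once and for all, expressing the Laplace sign as $(-1)^{\sigma(I)+\sigma(J)}$ through explicit position sums, and then tracking the additional sign incurred by the internal column-cofactor expansion that produces $A'$ and $A''$. The antisymmetry between the roles of $\beta_i$ and $\beta_l$ (equivalently, between $\beta_{ik}$ and $\beta_{jl}$) is what ultimately forces the cancellation of the extra terms.
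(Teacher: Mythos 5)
Your outline is the transpose of the paper's argument, and it does work, but you have left unfinished exactly the step that the paper's choice of expansion makes trivial. The paper applies the generalized Laplace expansion along the three special \emph{columns} ($\beta_{ik},\beta_{jk},\beta_{jl}$ for $M$, resp.\ $\beta_{il},\beta_{jj},\beta_{kk}$ for $N$), summing over $3$-element \emph{row} sets. Because $j,k\notin U$ forces the complementary minor to vanish unless the selected rows include both $\beta_j$ and $\beta_k$, each determinant collapses to exactly three terms, with complementary blocks $A$, $B$ (rows $\beta_0,\beta_l,\beta_{u_*}$), $C$ (rows $\beta_0,\beta_i,\beta_{u_*}$) that are \emph{literally the same matrices} for $M$ and $N$; one then reads off $\det(M)=\det(A)(-f_{ik}f_jf_l+f_{jk}f_if_l-f_{jl}f_if_k)+\det(B)f_jf_kf_l-\det(C)f_if_jf_k$ and $\det(N)=\det(A)f_{il}f_jf_k-\det(B)f_jf_kf_l+\det(C)f_if_jf_k$, and the cancellation is immediate, with no reassembly of cofactor sums and no separate sign analysis. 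Your expansion along the three rows $\beta_0,\beta_j,\beta_k$ instead produces $1+2(r-2)$ surviving column sets for $M$ (and $1+(r-2)$ for $N$); your identification of which mixed terms survive, the computation of the distinguished $3\times 3$ minors, and the claim that the mixed families reassemble into $f_if_jf_k\det(A')$ and $f_jf_kf_l\det(A'')$ (which are $\pm\det(C)$ and $\pm\det(B)$ up to row permutation) are all correct, and the opposite-sign cancellation you assert is in fact true. But as written the proposal is not yet a proof: the sign bookkeeping you defer --- the distinguished Laplace sign, the inner cofactor signs in the reassembly, and above all the verification that the two extra contributions enter $\det(M)$ and $\det(N)$ with opposite signs --- is the entire content of the cancellation, and an error there would silently destroy the identity. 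Either carry out that computation explicitly, or (better) transpose your expansion to run along the three special columns, which eliminates the mixed terms and the reassembly step altogether.
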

\begin{proof}
Note that $j$, $k$ are not in $U$. The $(\beta_j,\beta_{i_aj_a})$ and $(\beta_k, \beta_{i_aj_a})$ entries of both $M$ and $N$ are zero for all $a \in \{1, \dots, r-2\}$. Then, any $(r-2)$-by-$(r-2)$ submatrix of $M(:, [ \beta_{i_1j_1}, \dots, \beta_{i_{r-2}j_{r-2}}]) = N(:, [\beta_{i_1j_1}, \dots, \beta_{i_{r-2}j_{r-2}}])$ that contains either the row $\beta_j$ or the row $\beta_k$ has a determinant zero. Therefore, applying Laplace's cofactor expansion along the first three columns $\beta_{ik}$, $\beta_{jk}$ and $\beta_{jl}$ of $M$ leads to
    \[
    \begin{split}
        \det(M) = & \det(A)\begin{vmatrix}
        f_{ik} & f_{jk} & f_{jl}\\
        0    & f_k    & f_l \\
        f_i  & f_j    & 0
    \end{vmatrix} 
    - \det(B) \begin{vmatrix}
        f_k & 0   & 0\\
        0 & f_k & f_l \\
        f_i   & f_j & 0
    \end{vmatrix} 
    + \det(C) \begin{vmatrix}
        0   & f_k & f_l \\
        f_i & f_j & 0 \\
        0   & 0   & f_j
    \end{vmatrix} \\
    = & \det(A)\left(-f_{ik}f_jf_l + f_{jk}f_if_l - f_{jl}f_if_k\right) + \det(B)f_jf_kf_l - \det(C)f_if_jf_k.
    \end{split}
    \]
Similarly, applying Laplace's cofactor expansion along the first three columns $\beta_{il}$, $\beta_{jj}$ and $\beta_{kk}$ of $N$ leads to 
    \[
    \begin{split}
        \det(N) = & \det(A)\begin{vmatrix}
        f_{il} & \frac12 f_{jj} & \frac12 f_{kk}\\
        0 & f_j    & 0 \\
        0     & 0  & f_k 
    \end{vmatrix} 
    - \det(B) \begin{vmatrix}
       f_l      & 0  & 0\\
       0 &  f_j    & 0   \\
       0 & 0      & f_k 
    \end{vmatrix} 
    + \det(C) \begin{vmatrix}
        0  & f_j    & 0  \\
        0 & 0      & f_k  \\
        f_i      & 0     & 0
    \end{vmatrix} \\
    = & \det(A)f_{il}f_jf_k - \det(B)f_jf_kf_l + \det(C)f_if_jf_k.
    \end{split}
    \]

Taking the sum yields
\[\det(M) + \det(N) = -Q_{ij;kl}(F)\det(A).\]
\end{proof}

\begin{lemma}\label{lem:ExpansionMaximalMinorB}
Consider the $(r+1)$-by-$(r+1)$ matrices
\[M=R(:, [\beta_{ii}, \beta_{ij}, \beta_{jk}, \beta_{i_1j_1}, \dots, \beta_{i_{r-2}j_{r-2}}])\]
    and
    \[N=R(:, [\beta_{ii}, \beta_{ik}, \beta_{jj}, \beta_{i_1j_1}, \dots, \beta_{i_{r-2}j_{r-2}}]),\]
    where $U:=\bigcup\limits_{a=1}^{r-2}\left(\{i_a\}\cup\{j_a\}\right) \subseteq S$.
\begin{enumerate}
    \item \label{lem-item:mi=4}
    If $j$ is not in $U$, then
    \[\det(M) + \det(N) =  Q_{ij;ik}(F) \det(A),\]
    where $A$ is the $(r-2)$-by-$(r-2)$ submatrix obtained from $M$ by removing the rows $\beta_0$, $\beta_i$ $\beta_j$ and the columns $\beta_{ii}$, $\beta_{ij}$, $\beta_{jk}$. 

    \item \label{lem-item:mi=3}
    If $i$ is not in $U$, then
    \[\det(M) + \det(N) = Q_{ij;ik}(F) \det(A)  - \frac12 Q_{ij;ij}(F) \det(B),\]
    where $A$ is the submatrix obtained from $M$ by removing the rows $\beta_0$, $\beta_i$, $\beta_j$ and the columns $\beta_{ii}$, $\beta_{ij}$, $\beta_{jk}$, and $B$ is the submatrix obtained from $M$ by removing the rows $\beta_0$, $\beta_i$, $\beta_k$ and the columns $\beta_{ii}$, $\beta_{ij}$, $\beta_{jk}$. 
\end{enumerate}
\end{lemma}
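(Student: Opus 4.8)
The plan is to follow the template of Lemma~\ref{lem:ExpansionMaximalMinorA}: expand both $\det(M)$ and $\det(N)$ by a Laplace cofactor expansion along their three distinguished columns, add the two expansions, and watch the cross terms collapse. First I would record that in both $M$ and $N$ the only rows carrying nonzero entries in the first three columns (the columns $\beta_{ii},\beta_{ij},\beta_{jk}$ for $M$ and $\beta_{ii},\beta_{ik},\beta_{jj}$ for $N$) are $\beta_0,\beta_i,\beta_j,\beta_k$, and that the remaining $r-2$ columns $\beta_{i_1j_1},\dots,\beta_{i_{r-2}j_{r-2}}$ are identical in $M$ and $N$. Consequently every complementary $(r-2)$-minor produced by the expansion is literally the same for $M$ and for $N$; this shared structure is what forces the eventual cancellation.

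The key reduction comes from the hypothesis. In part~\ref{lem-item:mi=4}, since $j\notin U$ the row $\beta_j$ is identically zero across all columns $\beta_{i_aj_a}$; hence any complementary minor containing $\beta_j$ vanishes, so $\beta_j$ must be one of the three rows selected for the distinguished columns. This restricts the expansion to the three row-triples $\{\beta_0,\beta_i,\beta_j\}$, $\{\beta_0,\beta_j,\beta_k\}$, $\{\beta_i,\beta_j,\beta_k\}$, whose complementary minors I will denote $\det(A)$, $\det(B')$, $\det(C')$. In part~\ref{lem-item:mi=3} the same argument with $i\notin U$ forces $\beta_i$ into the distinguished block, leaving the triples $\{\beta_0,\beta_i,\beta_j\}$, $\{\beta_0,\beta_i,\beta_k\}$, $\{\beta_i,\beta_j,\beta_k\}$, with complementary minors $\det(A)$, $\det(B)$, $\det(C')$.

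Next I would compute the six $3\times 3$ minors of the distinguished columns (three for $M$, three for $N$), attaching the Laplace sign $(-1)^{\sum_p p}$ determined by the positions of the chosen rows among $\beta_0,\beta_i,\beta_j,\beta_k$, which sit in positions $1,2,3,4$ of $R$. Upon adding $\det(M)+\det(N)$, the $\det(A)$ coefficients combine into $f_{ii}f_jf_k - f_{ij}f_if_k - f_{ik}f_if_j + f_{jk}f_i^2 = Q_{ij;ik}(F)$. In part~\ref{lem-item:mi=4} the $\det(B')$ and $\det(C')$ coefficients are equal and opposite in $\det(M)$ and $\det(N)$, so both cancel, leaving $Q_{ij;ik}(F)\det(A)$. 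In part~\ref{lem-item:mi=3} only the $\det(C')$ coefficient cancels, while the $\det(B)$ coefficients sum to $-\tfrac12\bigl(f_{ii}f_j^2 - 2f_{ij}f_if_j + f_{jj}f_i^2\bigr) = -\tfrac12 Q_{ij;ij}(F)$, giving the stated formula.

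The main obstacle I anticipate is purely bookkeeping: correctly tracking the Laplace signs and verifying that the spurious complementary minors $\det(B')$ and $\det(C')$ enter $\det(M)$ and $\det(N)$ with equal magnitudes and opposite signs. Because the two matrices share their final $r-2$ columns, these complementary minors coincide exactly, so the cancellation is clean; the only genuine care needed is to confirm the sign pattern of the individual $3\times 3$ minors so that the surviving coefficients assemble into the antisymmetric combinations defining $Q_{ij;ik}(F)$ and $Q_{ij;ij}(F)$.
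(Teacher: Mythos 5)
Your proposal is correct and follows essentially the same route as the paper: a Laplace expansion of $\det(M)$ and $\det(N)$ along the three distinguished columns, using the hypothesis $j\notin U$ (resp. $i\notin U$) to force the row $\beta_j$ (resp. $\beta_i$) into the selected $3\times 3$ block so that only the three row-triples you list survive, after which the shared complementary minors make the spurious terms cancel and the $\det(A)$ coefficients assemble into $Q_{ij;ik}(F)$ (and the $\det(B)$ coefficients into $-\tfrac12 Q_{ij;ij}(F)$ in part~(2)). The signs and the explicit $3\times 3$ determinants you describe match the paper's computation exactly.
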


\begin{proof}
If $j$ is not in $U$, then $(\beta_j, \beta_{i_aj_a})$ entries of both $M$ and $N$ are zero for all $a \in \{1, \dots, r-2\}$. Similar to the proof of Lemma \ref{lem:ExpansionMaximalMinorA}, applying Laplace's cofactor expansion along the first three columns of $M$ and $N$ results in
    \[
    \begin{split}
    \det(M) = & \det(A)\begin{vmatrix}
        \frac12 f_{ii} & f_{ij} & f_{jk}\\
        f_i    & f_j    & 0 \\
        0      & f_i    & f_k
    \end{vmatrix} 
    + \det(B) \begin{vmatrix}
        \frac12 f_{ii} & f_{ij}   & f_{jk}\\
        0 & f_i & f_k \\
        0   & 0 & f_j
    \end{vmatrix} 
    - \det(C) \begin{vmatrix}
        f_i & f_j & 0 \\
        0   & f_i & f_k \\
        0   & 0   & f_j
    \end{vmatrix} \\
    = & \det(A)\left(\frac12 f_{ii}f_jf_k-f_{ij}f_if_k+f_{jk}f_i^2\right) + \frac12 \det(B)f_{ii}f_if_j - \det(C)f_i^2f_j
    \end{split}
    \]
    and 
    \[
    \begin{split}
        \det(N) = & \det(A)\begin{vmatrix}
        \frac12 f_{ii} & f_{ik} & \frac12 f_{jj}\\
        f_i    & f_k    & 0 \\
        0      & 0    & f_j
    \end{vmatrix} 
    + \det(B) \begin{vmatrix}
        \frac12 f_{ii} & f_{ik}   & \frac12 f_{jj}\\
        0 & 0 & f_j \\
        0   & f_i & 0
    \end{vmatrix} 
    - \det(C) \begin{vmatrix}
        f_i & f_k & 0 \\
        0   & 0 & f_j \\
        0   & f_i   & 0
    \end{vmatrix} \\
     = & \det(A) \left(\frac12 f_{ii}f_jf_k - f_{ik}f_if_j\right) - \frac12 \det(B)f_{ii}f_if_j + \det(C) f_i^2f_j.
    \end{split}
    \]
    
Taking the sum yields
\[\det(M) + \det(N) = \det(A) Q_{ij;ik}(F).\]

If $i$ is not in $U$, then the $(\beta_i,\beta_{i_aj_a})$ entries of $M$ and $N$ are zero. Therefore,
\[
\begin{split}
    \det(M) = & \det(A)\begin{vmatrix}
        \frac{1}{2}f_{ii} & f_{ij} & f_{jk}\\
        f_i    & f_j    & 0 \\
        0      & f_i    & f_k
    \end{vmatrix} 
    - \det(B) \begin{vmatrix}
        \frac12 f_{ii} & f_{ij}   & f_{jk}\\
        f_i & f_j & 0 \\
        0   & 0 & f_j
    \end{vmatrix} 
    - \det(C) \begin{vmatrix}
        f_i & f_j & 0 \\
        0   & f_i & f_k \\
        0   & 0   & f_j
    \end{vmatrix} \\
    = &  \det(A)\left(\frac12f_{ii}f_jf_k - f_{ij}f_if_k +f_{jk} f_i^2\right) - \det(B)\left(\frac{1}{2}f_{ii}f_j^2 - f_{ij}f_if_j\right) - \det(C)f_i^2f_j
\end{split}
    \]
    and 
    \[
    \begin{split}
    \det(N) = & \det(A)\begin{vmatrix}
        \frac12 f_{ii} & f_{ik} & \frac12 f_{jj}\\
        f_i    & f_k    & 0 \\
        0      & 0    & f_j
    \end{vmatrix} 
    - \det(B) \begin{vmatrix}
        \frac12 f_{ii} & f_{ik}   & \frac12 f_{jj}\\
        f_i & f_k & 0 \\
        0   & f_i & 0
    \end{vmatrix} 
    - \det(C) \begin{vmatrix}
        f_i & f_k & 0 \\
        0   & 0 & f_j \\
        0   & f_i   & 0
    \end{vmatrix} \\
        & = \det(A)\left(\frac{1}{2}f_{ii}f_jf_k - f_{ik}f_if_j\right) - \frac12 \det(B)f_{jj}f_i^2 + \det(C)f_i^2f_j.
    \end{split}
    \]
    Taking the sum yields
    \[
    \det(M) +\det(N) = Q_{ij;ik}(F)\det(A) - \frac12 Q_{ij;ij}(F)\det(B).
    \]
\end{proof}

\begin{lemma}\label{lem:ExpansionMaximalMinoriijj}
    Consider the $(r+1)$-by-$(r+1)$ matrix 
    \[M=R(:,[\beta_{ii}, \beta_{ij}, \beta_{jj}, \beta_{i_1j_1}, \dots, \beta_{i_{r-2}j_{r-2}}]).\]
    Then 
    \[
    \det(M) = 
    \frac{1}{2} Q_{ij;ij}(F)\det(A),
    \]
    where $A$ is the submatrix obtained by removing the rows $\beta_0$, $\beta_i$, $\beta_j$ and the columns $\beta_{ii}$, $\beta_{ij}$, $\beta_{jj}$. 
\end{lemma}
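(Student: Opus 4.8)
The plan is to follow the same strategy as in the proofs of Lemmas \ref{lem:ExpansionMaximalMinorA} and \ref{lem:ExpansionMaximalMinorB}, namely a Laplace cofactor expansion of $\det(M)$ along the first three columns $\beta_{ii}$, $\beta_{ij}$, $\beta_{jj}$. The first observation I would record is that this case is in fact the cleanest of the three, because these three columns are supported on very few rows. Reading off the entries from the definition of $\Jac_2(F)$, the column $\beta_{ii}$ is nonzero only in rows $\beta_0$ and $\beta_i$, the column $\beta_{jj}$ only in rows $\beta_0$ and $\beta_j$, and the column $\beta_{ij}$ only in rows $\beta_0$, $\beta_i$, $\beta_j$. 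Hence the only rows meeting these three columns are $\beta_0$, $\beta_i$, $\beta_j$.

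Next I would invoke the hypothesis that $i$ and $j$ do not occur among the indices of $\beta_{i_1j_1}, \dots, \beta_{i_{r-2}j_{r-2}}$ (that is, $i,j\notin U$), so that rows $\beta_i$ and $\beta_j$ vanish identically on the trailing $r-2$ columns. Consequently, in the Laplace expansion any complementary $(r-2)$-by-$(r-2)$ minor retaining either $\beta_i$ or $\beta_j$ is zero. Combined with the support computation above, this forces the three rows of the leading block to be exactly $\beta_0$, $\beta_i$, $\beta_j$: these must all sit in the $3$-by-$3$ block, and every alternative kills either the $3$-by-$3$ minor or its complement. Thus exactly one term survives, and its complementary minor is precisely $\det(A)$, where $A$ is the submatrix on the rows $S\setminus\{i,j\}$ and the columns $\beta_{i_1j_1}, \dots, \beta_{i_{r-2}j_{r-2}}$.

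It then remains to evaluate the single surviving $3$-by-$3$ determinant and to match it against the defining expression for $Q$. Computing,
\[
\begin{vmatrix}
\tfrac12 f_{ii} & f_{ij} & \tfrac12 f_{jj} \\
f_i & f_j & 0 \\
0 & f_i & f_j
\end{vmatrix}
= \tfrac12 f_{ii} f_j^2 - f_{ij} f_i f_j + \tfrac12 f_{jj} f_i^2 .
\]
Since $f_{ij}=f_{ji}$, substituting $k=i$ and $l=j$ into the definition of $Q_{ij;kl}(F)$ gives $Q_{ij;ij}(F) = f_{ii}f_j^2 - 2 f_{ij}f_if_j + f_{jj}f_i^2$, so the determinant above equals $\tfrac12 Q_{ij;ij}(F)$, and I would conclude $\det(M) = \tfrac12 Q_{ij;ij}(F)\det(A)$.

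The only point demanding care — and the step I expect to be the genuine, if minor, obstacle — is the sign bookkeeping in the cofactor expansion: I must confirm that the cofactor sign attached to the surviving $(\beta_0,\beta_i,\beta_j)$-minor is $+1$ under the fixed orderings, exactly as the leading $\det(A)$-term came out positive in Lemmas \ref{lem:ExpansionMaximalMinorA} and \ref{lem:ExpansionMaximalMinorB}. Here the block occupies the first three rows and first three columns of $M$, so the sign is $(-1)^{(1+2+3)+(1+2+3)}=+1$, and everything else reduces to the one-line $3$-by-$3$ evaluation above, leaving no essential difficulty.
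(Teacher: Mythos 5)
Your route---Laplace expansion along the three columns $\beta_{ii}$, $\beta_{ij}$, $\beta_{jj}$---is in substance the same as the paper's, which phrases it as block upper triangularity, and your $3$-by-$3$ evaluation and sign check are correct. There is, however, one genuine defect: you ``invoke the hypothesis that $i,j\notin U$.'' The lemma has no such hypothesis. Unlike Lemmas \ref{lem:ExpansionMaximalMinorA} and \ref{lem:ExpansionMaximalMinorB}, the statement of Lemma \ref{lem:ExpansionMaximalMinoriijj} places no restriction on the trailing column labels $\beta_{i_1j_1},\dots,\beta_{i_{r-2}j_{r-2}}$, and the paper applies it precisely in situations where $i$ or $j$ does occur among them: in Case 1 of the proof of Proposition \ref{prop:ExistenceMinors} the trailing columns are $\beta_{i_aj_a}$ with the $j_a$ arbitrary (for instance $j_a=i$ for every $a$, which is needed to realize $Q_{ij;ij}(F)\,f_i^{\,n-2}$). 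So, as written, your argument establishes only a special case of the lemma.

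Fortunately the extra hypothesis is not needed, and dropping it costs nothing. In the Laplace expansion along the first three columns, any $3$-by-$3$ minor of those columns whose row set is not exactly $\{\beta_0,\beta_i,\beta_j\}$ already contains a row that is identically zero on the columns $\beta_{ii}$, $\beta_{ij}$, $\beta_{jj}$ --- this is exactly the support computation you carried out at the start --- and hence vanishes, regardless of what happens in the complementary block. The complementary-minor argument, which is the only place you used $i,j\notin U$, is therefore redundant: the unique surviving term is already forced by the supports of the three leading columns, and its complement is $\det(A)$. Deleting that step yields a complete proof identical in content to the paper's.
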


\begin{proof}
By the definition of $M$, the only nonzero elements in the columns $\beta_{ii}$, $\beta_{ij}$, and $\beta_{jj}$ are those corresponding to the rows $\beta_0$, $\beta_i$, and $\beta_j$. 
Consequently, $M$ is a block upper triangular matrix:
\[
M =\begin{pmatrix}
    D & \ast \\
    0 & A
\end{pmatrix},
\]
where $D = M([\beta_0, \beta_i, \beta_j], [\beta_{ii}, \beta_{ij}, \beta_{jj}])$. A straightfoward calculation shows that 
\[\det(M)=\frac{1}{2} Q_{ij;ij}(F)\det(A).\] 
\end{proof}

\begin{lemma}\label{lem:ExpansionMaximalMinorijkl_ii}
    Consider the $(r+1)$-by-$(r+1)$ matrix    
    \[M=R(:,[\beta_{ii}, \beta_{ik}, \beta_{il}, \beta_{jk}, \beta_{jl}, \beta_{i_1j_1}, \dots, \beta_{i_{r-4}j_{r-4}}]).\]
    Then 
    \[
    \det(M) = f_i^2Q_{ij;kl}(F)\det(A),
    \]
    where $A$ is the submatrix obtained by removing the rows $\beta_{0}$, $\beta_{i}$, $\beta_{j}$, $\beta_{k}$, $\beta_{l}$ and the columns $\beta_{ii}$, $\beta_{ik}$, $\beta_{il}$, $\beta_{jk}$, $\beta_{jl}$.
\end{lemma}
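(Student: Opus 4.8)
The plan is to mirror the argument of Lemma~\ref{lem:ExpansionMaximalMinoriijj}: first exhibit $M$ as a block upper triangular matrix so that a factor of $\det(A)$ splits off cleanly, and then evaluate the remaining $5$-by-$5$ determinant explicitly. The key structural observation is that each of the five distinguished columns $\beta_{ii}$, $\beta_{ik}$, $\beta_{il}$, $\beta_{jk}$, $\beta_{jl}$ involves only the indices $i,j,k,l$. Hence in any of these columns a $(\beta_p,\beta_{xy})$ entry can be nonzero only when $p\in\{0,x,y\}\subseteq\{0,i,j,k,l\}$, so the only possibly nonzero entries in the first five columns lie in the rows $\beta_0$, $\beta_i$, $\beta_j$, $\beta_k$, $\beta_l$. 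Since the row labels of $R$ are all different, every index $u_a$ differs from $i,j,k,l$, so each row $\beta_{u_a}$ vanishes on these five columns. Because $M$ is already written with these five rows and five columns in the leading positions, it is block upper triangular,
\[
M = \begin{pmatrix} D & \ast \\ 0 & A \end{pmatrix},
\qquad D = M([\beta_0, \beta_i, \beta_j, \beta_k, \beta_l], [\beta_{ii}, \beta_{ik}, \beta_{il}, \beta_{jk}, \beta_{jl}]),
\]
and $A$ is exactly the claimed complement. Thus $\det(M) = \det(D)\det(A)$, with no hypothesis on $U$ required.

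It then remains to prove $\det(D) = f_i^2 Q_{ij;kl}(F)$. Writing $D$ out explicitly using the definition of $\Jac_2(F)$ and the distinctness of $i,j,k,l$ gives
\[
D = \begin{pmatrix}
\frac{1}{2} f_{ii} & f_{ik} & f_{il} & f_{jk} & f_{jl} \\
f_i & f_k & f_l & 0 & 0 \\
0 & 0 & 0 & f_k & f_l \\
0 & f_i & 0 & f_j & 0 \\
0 & 0 & f_i & 0 & f_j
\end{pmatrix}.
\]
I would expand along the first column, whose only nonzero entries are $\frac{1}{2}f_{ii}$ and $f_i$. A short computation shows that the $4$-by-$4$ minor multiplying $\frac{1}{2}f_{ii}$ vanishes identically, so the Hessian entry $f_{ii}$ drops out completely. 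The surviving contribution is $(-f_i)$ times the complementary $4$-by-$4$ minor; expanding that minor along its two sparse rows factors out another $(-f_i)$ and leaves precisely $f_{ik}f_jf_l - f_{jk}f_if_l - f_{il}f_jf_k + f_{jl}f_if_k = Q_{ij;kl}(F)$. Combining the two sign factors yields $\det(D) = (-f_i)(-f_i)Q_{ij;kl}(F) = f_i^2 Q_{ij;kl}(F)$, whence $\det(M) = f_i^2 Q_{ij;kl}(F)\det(A)$.

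The computation is entirely routine, and the only step demanding care is the bookkeeping of cofactor signs in the $5$-by-$5$ expansion. The one genuinely useful observation, which is what makes the identity clean, is that the minor attached to the second-order entry $\frac{1}{2}f_{ii}$ vanishes: consequently the determinant is governed solely by the first-order gradient data together with the mixed second derivatives $f_{ik},f_{jk},f_{il},f_{jl}$, which assemble exactly into $Q_{ij;kl}(F)$. I expect no serious obstacle beyond verifying that vanishing and tracking the signs.
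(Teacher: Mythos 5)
Your proposal is correct and follows essentially the same route as the paper: exhibit $M$ as block upper triangular with the $5$-by-$5$ block $D = M([\beta_0,\beta_i,\beta_j,\beta_k,\beta_l],[\beta_{ii},\beta_{ik},\beta_{il},\beta_{jk},\beta_{jl}])$ on top (the paper calls the rest "a straightforward calculation"), and your explicit evaluation of $\det(D)$ — including the vanishing of the cofactor of $\tfrac12 f_{ii}$ and the factorization $\det(D)=(-f_i)(-f_i)Q_{ij;kl}(F)$ — checks out.
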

\begin{proof}
By the definition of $M$, the only nonzero elements in the columns $\beta_{ii}$, $\beta_{ik}$, $\beta_{il}$, $\beta_{jk}$ and $\beta_{jl}$ are in the submatrix $D = M([\beta_0, \beta_i, \beta_j, \beta_k, \beta_l], [\beta_{ii}, \beta_{ik}, \beta_{il}, \beta_{jk}, \beta_{jl}])$.
Consequently, $M$ is a block upper triangular matrix
\[
M =\begin{pmatrix}
    D & \ast \\
    0 & A
\end{pmatrix}.
\]
A straightforward calculation shows that
\[\det(M) = f_i^2 Q_{ij;ik}(F)\det(A).\]
\end{proof}

\begin{lemma}\label{lem:ExpansionMaximalMinorijkl_ij}
    Consider the $(r+1)$-by-$(r+1)$ submatrix 
    \[M=R(:,[\beta_{ij}, \beta_{ik}, \beta_{il}, \beta_{jk}, \beta_{jl}, \beta_{i_1j_1}, \dots, \beta_{i_{r-4}j_{r-4}}]).\]
    Then 
    \[
    \det(M) = 2f_if_jQ_{ij;kl}(F)\det(A),
    \]
    where $A$ is the submatrix obtained by removing the rows $\beta_{0}$, $\beta_{i}$, $\beta_{j}$, $\beta_{k}$, $\beta_{l}$ and the columns $\beta_{ij}$, $\beta_{ik}$, $\beta_{il}$, $\beta_{jk}$, $\beta_{jl}$.
\end{lemma}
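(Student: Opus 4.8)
The plan is to reuse the two-step scheme from the proof of Lemma~\ref{lem:ExpansionMaximalMinorijkl_ii}, with the leading column $\beta_{ii}$ there replaced by $\beta_{ij}$ here. \textbf{Step 1 (block triangular reduction).} Both indices of each of the five columns $\beta_{ij},\beta_{ik},\beta_{il},\beta_{jk},\beta_{jl}$ lie in $\{i,j,k,l\}$, so by the definition of $\Jac_2(F)$ every nonzero entry in these columns sits in one of the rows $\beta_0,\beta_i,\beta_j,\beta_k,\beta_l$. Reordering so that these five rows and these five columns come first, $M$ becomes block upper triangular,
\[
M=\begin{pmatrix} D & \ast\\ 0 & A\end{pmatrix},\qquad D=M\big([\beta_0,\beta_i,\beta_j,\beta_k,\beta_l],[\beta_{ij},\beta_{ik},\beta_{il},\beta_{jk},\beta_{jl}]\big),
\]
with $A$ the submatrix named in the statement. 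Hence $\det(M)=\det(D)\det(A)$, and everything reduces to proving $\det(D)=2f_if_jQ_{ij;kl}(F)$.

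\textbf{Step 2 (the $5$-by-$5$ block).} Writing out
\[
D=\begin{pmatrix}
f_{ij} & f_{ik} & f_{il} & f_{jk} & f_{jl}\\
f_j & f_k & f_l & 0 & 0\\
f_i & 0 & 0 & f_k & f_l\\
0 & f_i & 0 & f_j & 0\\
0 & 0 & f_i & 0 & f_j
\end{pmatrix},
\]
I would expand $\det(D)$ by cofactors along the first row $\beta_0$. The cofactor of $f_{ij}$ is the $4$-by-$4$ minor on rows $\beta_i,\beta_j,\beta_k,\beta_l$ and columns $\beta_{ik},\beta_{il},\beta_{jk},\beta_{jl}$; its columns satisfy a linear relation and it vanishes, which matches the absence of an $f_{ij}$ term in $Q_{ij;kl}(F)$. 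Each of the remaining four minors has a row or column carrying a single nonzero entry ($f_i$ or $f_j$); peeling it off leaves a $3$-by-$3$ determinant of the bordered form $\det\left(\begin{smallmatrix} a & b & 0\\ c & 0 & b\\ 0 & c & a\end{smallmatrix}\right)=-2abc$, the coefficient $2$ arising because two products contribute with the same sign. Collecting the four signed terms yields
\[
\det(D)=2f_if_j\big(f_{ik}f_jf_l-f_{jk}f_if_l-f_{il}f_jf_k+f_{jl}f_if_k\big)=2f_if_jQ_{ij;kl}(F).
\]

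The calculation is routine; the only real care is tracking the cofactor signs and confirming that the first minor vanishes. I expect the mild obstacle to be organizing the four surviving $4$-by-$4$ minors so that the common factor $2f_if_j$ emerges uniformly, rather than evaluating five unrelated determinants. This runs exactly parallel to Lemma~\ref{lem:ExpansionMaximalMinorijkl_ii}; only the leading column $\beta_{ij}$ (versus $\beta_{ii}$ there) differs, and it is precisely this change that converts the prefactor $f_i^2$ of that lemma into $2f_if_j$ here.
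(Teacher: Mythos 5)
Your proposal is correct and follows the paper's proof exactly: the paper likewise observes that the five columns $\beta_{ij},\beta_{ik},\beta_{il},\beta_{jk},\beta_{jl}$ have nonzero entries only in the rows $\beta_0,\beta_i,\beta_j,\beta_k,\beta_l$, puts $M$ in block upper triangular form with that $5$-by-$5$ block $D$, and then evaluates $\det(D)=2f_if_jQ_{ij;kl}(F)$ by a direct calculation. Your expansion of $\det(D)$ along the row $\beta_0$ (with the $f_{ij}$ minor vanishing and the other four minors each contributing $\pm 2f_if_j$ times a term of $Q_{ij;kl}(F)$) checks out and simply makes explicit what the paper calls a straightforward calculation.
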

\begin{proof}
By the definition of $M$, the only nonzero elements in the columns $\beta_{ij}$, $\beta_{ik}$, $\beta_{il}$, $\beta_{jk}$ and $\beta_{jl}$ are in the submatrix $D = M([\beta_0, \beta_i, \beta_j, \beta_k, \beta_l], [\beta_{ij}, \beta_{ik}, \beta_{il}, \beta_{jk}, \beta_{jl}])$.
Consequently, $M$ is a block upper triangular matrix
\[
M =\begin{pmatrix}
    D & \ast \\
    0 & A
\end{pmatrix}.
\]
A straightforward calculation shows that
\[\det(M) = 2f_if_jQ_{ij;kl}(F)\det(A).\]
\end{proof}

\begin{lemma}\label{lem:MinorReduction}
    Consider the $(r+1)$-by-$(r+1)$ matrix 
    \[M=R(:, [\beta_{ii}, \beta_{i_1j_1}, \dots, \beta_{i_{r}j_{r}}]),\] 
    where $U := \bigcup\limits_{a=1}^r\left(\{i_a\}\cup \{j_a\}\right) \subseteq S\setminus\{i\}$. 
    Then 
    \[\det(M) = - f_i\det(N),\]
    where $N$ is the submatrix of $M$ obtained by removing the $\beta_i$ row and the $\beta_{ii}$ column.
\end{lemma}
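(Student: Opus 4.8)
The plan is to compute $\det(M)$ by a single Laplace cofactor expansion, using the hypothesis $i \notin U$ to force the row $\beta_i$ to be almost entirely zero.

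First I would read off the relevant entries directly from the definition of $\Jac_2(F)$. Since the column label $\beta_{ii}$ has a repeated index, the column $\beta_{ii}$ of $M$ has exactly two nonzero entries: the entry $\tfrac{1}{2}f_{ii}$ in the row $\beta_0$ and the entry $f_i$ in the row $\beta_i$; every other entry in that column vanishes. Next, the hypothesis $U \subseteq S \setminus \{i\}$ says that the index $i$ occurs in none of the column labels $\beta_{i_1 j_1}, \dots, \beta_{i_r j_r}$, so the $(\beta_i, \beta_{i_a j_a})$ entries are all zero. Combining these two observations, the row $\beta_i$ of $M$ has a single nonzero entry, namely $f_i$, sitting in the column $\beta_{ii}$.

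With this in hand I would expand $\det(M)$ along the row $\beta_i$. Because that row has only one nonzero entry, the expansion collapses to a single term $\det(M) = (-1)^{p+q} f_i \det(N)$, where $p$ and $q$ are the positions of the row $\beta_i$ and the column $\beta_{ii}$ in $M$, and the complementary minor, obtained by deleting the row $\beta_i$ and the column $\beta_{ii}$, is exactly $\det(N)$. In the orderings inherited from $R$, the row $\beta_0$ is first and $\beta_i$ is second, while $\beta_{ii}$ is the first column of $M$; hence $(-1)^{p+q} = (-1)^{2+1} = -1$ and $\det(M) = -f_i \det(N)$.

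The only genuinely delicate point is the sign, i.e. pinning down the relative positions of the row $\beta_i$ and the column $\beta_{ii}$. As a cross-check I would instead expand along the column $\beta_{ii}$: the cofactor attached to the entry $\tfrac{1}{2}f_{ii}$ then vanishes, since deleting the row $\beta_0$ and the column $\beta_{ii}$ leaves the row $\beta_i$ entirely zero, and the surviving cofactor attached to $f_i$ reproduces the sign $(-1)^{2+1} = -1$. Either route makes the identity immediate, so beyond this sign bookkeeping there is no real obstacle.
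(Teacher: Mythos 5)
Your proof is correct and follows the same route as the paper: since $i\notin U$, the row $\beta_i$ has $f_i$ in the column $\beta_{ii}$ as its only nonzero entry, and a cofactor expansion along that (second) row against the first column gives $\det(M)=(-1)^{2+1}f_i\det(N)=-f_i\det(N)$. The extra cross-check via the column expansion is not in the paper but is consistent with it.
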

\begin{proof}
Since $i\not\in U$, the $(\beta_i, \beta_{i_aj_a})$ entries are zero for all $a\in\{ 1, \dots, r\}$. Therefore, applying Laplace's cofactor expansion along the second row $\beta_i$ of $M$ results in
\[
    \det(M) = - f_i \det(N).
\]
\end{proof}

\section{Decomposition of Second Jacobian Ideals}

With the preliminary calculations in the previous section, we are now ready to prove Conjecture \ref{conj:decomposition} (\cite[Conjecture 2.8]{Ramirez2024}) which has been proved for $n=2$ (\cite[Lemma 4.1]{Hussain2023}) and $n=3$ (\cite[Proposition 2.9]{Ramirez2024}). In this section, we confirm the conjecture for $n > 3$.

\begin{proposition}\label{prop:CalcuationMinors}
Let $F$ be a holomorphic function of $n$ variables and $M=\Jac_2(F)\left(:, [\beta_{i_1 j_1}, \dots, \beta_{i_{n+1} j_{n+1}}]\right)$ a maximal square submatrix of $\Jac_2(F)$, where $1\le i_a, j_a \le n$. Then
\[\det(M) \in \mathcal{J}_1(F)^{n-2}\mathcal{Q}(F).\]
\end{proposition}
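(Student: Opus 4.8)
The plan is to reduce an arbitrary maximal minor to the block-triangular normal form of Lemma~\ref{lem:blockUpperDiagonal} and then peel off a factor $Q_{ij;kl}(F)$ using the expansion lemmas of Section~3. Write $U=\bigcup_{a}\left(\{i_a\}\cup\{j_a\}\right)$ for the set of indices occurring in the chosen columns. If $|U|<n$, then Lemma~\ref{lem:degenerateSubmatrix} gives $\det(M)=0$ and there is nothing to prove, so I may assume $|U|=n$; then the row-index set $\{0,1,\dots,n\}$ and the column-index set $\{1,\dots,n\}$ place $M$ exactly in the situation of Lemma~\ref{lem:blockUpperDiagonal}. That lemma presents $M\sim_\sigma$ a block upper triangular matrix with diagonal blocks $D,B_m,\dots,B_1$, where each $B_s$ is diagonal with first-partial entries and $D$ is a $(t+1)$-by-$(t+1)$ matrix containing $\beta_0$ whose nonzero row indices each occur at least twice among its column labels. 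Since permutation equivalence alters the determinant only by a sign,
\[
\det(M)=\pm\det(D)\prod_{s}\det(B_s),
\]
and the product over the $B_s$ is a product of $n-t$ first partials, hence lies in $\mathcal{J}_1(F)^{n-t}$. It therefore suffices to prove the self-contained claim that $\det(D)\in\mathcal{J}_1(F)^{t-2}\mathcal{Q}(F)$, because $\mathcal{J}_1(F)^{n-t}\cdot\mathcal{J}_1(F)^{t-2}\mathcal{Q}(F)=\mathcal{J}_1(F)^{n-2}\mathcal{Q}(F)$.

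The single most useful observation is that any square submatrix $A$ of $\Jac_2(F)$ not containing the row $\beta_0$ has every entry equal to $0$ or to a first partial $f_v$; expanding by the Leibniz formula exhibits $\det(A)$ as a sum of products of $\dim(A)$ first partials (generalizing Lemma~\ref{lem:detSubmatrices}), so $\det(A)\in\mathcal{J}_1(F)^{\dim(A)}$. This is precisely what makes the right-hand sides of Lemmas~\ref{lem:ExpansionMaximalMinoriijj}, \ref{lem:ExpansionMaximalMinorijkl_ii}, and \ref{lem:ExpansionMaximalMinorijkl_ij} land in the target ideal: in each the complementary matrix $A$ omits $\beta_0$, and a factor $Q_{ij;kl}(F)$ together with the remaining first partials supplies the required power of $\mathcal{J}_1(F)$. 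I would prove the claim on $\det(D)$ by strong induction on $t$. For the base case $t=2$, Lemma~\ref{lem:doubleindices} forces the three columns of $D$ to be $\beta_{ii},\beta_{ij},\beta_{jj}$, and Lemma~\ref{lem:ExpansionMaximalMinoriijj} gives $\det(D)=\frac{1}{2}Q_{ij;ij}(F)\in\mathcal{Q}(F)=\mathcal{J}_1(F)^{0}\mathcal{Q}(F)$.

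For the inductive step I would run the case analysis dictated by Lemma~\ref{lem:doubleindices}: either one row index has frequency $4$, or two row indices have frequency $3$, every other index having frequency $2$. Viewing the column labels of $D$ as edges (and $\beta_{ii}$ as loops) of a multigraph on the nonzero row indices, the special indices carry the entire cycle-rank-two part, while each frequency-$2$ index contributes a first partial through the $\beta_0$-free block $A$. When the special indices form a clean diagonal block — $\beta_{ii},\beta_{ij},\beta_{jj}$, or $\beta_{ii},\beta_{ik},\beta_{il},\beta_{jk},\beta_{jl}$, or $\beta_{ij},\beta_{ik},\beta_{il},\beta_{jk},\beta_{jl}$ — I apply the corresponding direct lemma and conclude by the degree count above. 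Otherwise the special columns do not split off, and I instead pair $D$ with a companion matrix $N$ via Lemma~\ref{lem:ExpansionMaximalMinorA} or Lemma~\ref{lem:ExpansionMaximalMinorB}, obtaining for instance
\[
\det(D)=-Q_{ij;kl}(F)\det(A)-\det(N),
\]
where $A$ is $\beta_0$-free of size $t-2$, so the first term already lies in $\mathcal{J}_1(F)^{t-2}\mathcal{Q}(F)$. The companion $N$ is arranged so that the indices $j,k$ (absent from $U$ by the hypotheses of the pairing lemmas) occur only in the loop columns $\beta_{jj},\beta_{kk}$; two applications of Lemma~\ref{lem:MinorReduction} strip these loops, giving $\det(N)=f_jf_k\det(N'')$ with $N''$ of the same type but smaller size. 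Removing an isolated loop leaves every other frequency unchanged and keeps each column index a row index, so $N''$ again satisfies the hypotheses of Lemma~\ref{lem:doubleindices}; the induction hypothesis yields $\det(N'')\in\mathcal{J}_1(F)^{(t-2)-2}\mathcal{Q}(F)$, whence $\det(N)\in\mathcal{J}_1(F)^{t-2}\mathcal{Q}(F)$ and $\det(D)$ lies in the target ideal.

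The hard part will be the bookkeeping of the inductive step: one must check that the finitely many configurations permitted by Lemma~\ref{lem:doubleindices} are genuinely exhausted by the six lemmas of Section~3, and that in every pairing case the non-membership conditions ``$j\notin U$'' or ``$i\notin U$'' demanded by Lemmas~\ref{lem:ExpansionMaximalMinorA} and \ref{lem:ExpansionMaximalMinorB} actually hold — these follow from the frequency-$2$ indices being forced into a single predictable pair of columns, but this must be verified configuration by configuration. I expect the most delicate cases to be the frequency-$3$ situations in which neither special index carries a loop, since there the second case of Lemma~\ref{lem:ExpansionMaximalMinorB} contributes an extra $\frac{1}{2}Q_{ij;ij}(F)\det(B)$ term that must itself be absorbed into the ideal. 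Over-counting the power of $\mathcal{J}_1(F)$ is harmless throughout, since $\mathcal{J}_1(F)^{p}\mathcal{Q}(F)\subseteq\mathcal{J}_1(F)^{t-2}\mathcal{Q}(F)$ whenever $p\ge t-2$, which grants the case analysis welcome slack.
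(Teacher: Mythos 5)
Your proposal follows the paper's proof essentially verbatim: reduce to the core block $D$ via Lemma~\ref{lem:blockUpperDiagonal}, induct on its size starting from the $\beta_{ii},\beta_{ij},\beta_{jj}$ base case, and in the inductive step use Lemmas~\ref{lem:ExpansionMaximalMinorA}--\ref{lem:MinorReduction} to peel off a $Q$-factor and reduce a companion matrix by stripping loop columns. The configuration-by-configuration bookkeeping you defer (including a separate base case at dimension four, needed so that the loop-stripping reductions never drop below dimension three) is precisely what occupies the body of the paper's argument, and it closes in the way you predict.
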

\begin{proof}
If $\bigcup\limits_{a=1}^{n+1}\left(\{i_a\}\cup\{j_a\}\right)\neq \{1, \dots, n\}$, by Lemma \ref{lem:degenerateSubmatrix}, $\det(M)=0$. Therefore, we assume $\bigcup\limits_{a=1}^{n+1}\left(\{i_a\}\cup\{j_a\}\right) = \{1, \dots, n\}$.
    By Lemma \ref{lem:blockUpperDiagonal}, we can assume that $M$ is a block upper triangular matrix whose determinant is  
    \[\det(M)=\det(D)\prod_{i}\det(B_i),\] 
    where $B_i$ are diagonal matrices, every nonzero row index of $D$ appears in at least two column labels of $D$, and each column index of $D$ is also a row index of $D$. Suppose that the dimension of $D$ is $r+1$. By Lemma \ref{lem:doubleindices}, we know that $r\ge 2$.
    
    Since the diagonal entries of $B_i$ are the first partial derivative $f_a$ and the total rank of $B_i$ is $(n+1)-(r+1) = n-r$, the product $\prod\limits_{i}\det(B_i)$ is in $\mathcal{J}_1(F)^{n-r}$. 
    To prove the claim, it suffices to show that
    \[\det(D)\in \mathcal{J}_1(F)^{r-2}\mathcal{Q}(F).\]

The proof proceeds by induction on the dimension of $D$. 

Suppose $r + 1 = 3$ and assume that the row indices of $D$ are $0$, $i$ and $j$, where $1\le i < j\le n$. By Lemma \ref{lem:doubleindices}, the column labels of $D$ must be $\beta_{ii}$, $\beta_{ij}$ and $\beta_{jj}$. Up to row and column switching, we may assume that 
\[
D = M([\beta_0, \beta_i, \beta_j], [\beta_{ii}, \beta_{ij}, \beta_{jj}]).
\]
By Lemma \ref{lem:ExpansionMaximalMinoriijj}, 
\[\det(D)= \frac12Q_{ij;ij}(F) \in \mathcal{Q}(F).\]

Suppose $r + 1 = 4$ and assume that the row indices of $D$ are $0$, $i$, $j$, and $k$ where $1\le i < j < k\le n$. By Lemma \ref{lem:doubleindices}, we may assume that $m_i = 4$ or $m_i=m_j=3$. Then up to row and column switching, we may assume that
\[
D = M([\beta_0, \beta_i, \beta_j, \beta_k], [\beta_{ii}, \beta_{ij}, \beta_{jk}, \beta_{ik}])  \text{ or } D = M([\beta_0, \beta_i, \beta_j, \beta_k], [\beta_{ii}, \beta_{ik}, \beta_{jj}, \beta_{jk}]). 
\]

If $D = M([\beta_0, \beta_i, \beta_j, \beta_k], [\beta_{ii}, \beta_{ij}, \beta_{jk}, \beta_{ik}])$, then applying Lemma \ref{lem:ExpansionMaximalMinorB} \eqref{lem-item:mi=4} with $M=D$, along with the consequence that $N$ has duplicate columns $\beta_{ik}$, implies
\[\det(D)=f_iQ_{ij;ik}(F) \in \mathcal{J}_1(F)\mathcal{Q}(F).\]

If $D = M([\beta_0, \beta_i, \beta_j, \beta_k], [\beta_{ii}, \beta_{ik}, \beta_{jj}, \beta_{jk}])$, then applying Lemma \ref{lem:ExpansionMaximalMinorB} \eqref{lem-item:mi=3} with $N=D$, along with the consequence that $M$ has duplicate columns $\beta_{jk}$, implies
\[\det(D) = f_j Q_{ij;ik}(F) - \frac{1}{2}f_k Q_{ij;ij}(F) \in  \mathcal{J}_1(F)\mathcal{Q}(F).\]

Suppose $\det(D)\in \mathcal{J}_1(F)^{r - 2}\mathcal{Q}(F)$ for $r + 1\le d$. 
We will show that $\det(D)\in \mathcal{J}_1(F)^{r - 2}\mathcal{Q}(F)$ for $r + 1 = d + 1$.

By Lemma \ref{lem:doubleindices}, $D$ has a row index $i$ with frequency $m_i \ge 3$ and each other row index of $D$ appears in at least two column labels of $D$.

\paragraph{\textbf{Case 1:}}
Suppose that $D$ contains the column $\beta_{ii}$. Then $D$ also contains columns $\beta_{ij}$, $\beta_{jk}$. If $k=j$, then by \ref{lem:doubleindices} no index other than $i$ and $j$ has frequency $3$. Applying Lemma \ref{lem:ExpansionMaximalMinoriijj} leads to $\det(D)=\frac{1}{2}Q_{ij;ij}(F)\det(A)$. Here $A$ is a matrix of dimension $r-2$ whose entries are among the first partial derivatives of $F$. Therefore, $\det(A) \in \mathcal{J}_1(F)^{r-2}$ and consequently, $\det(D)\in  \mathcal{J}_1(F)^{r - 2}\mathcal{Q}(F)$.

Suppose that $k\ne j$. Let $N$ be the matrix obtained from $D$ by replacing the entries in the columns $\beta_{ij}$ and $\beta_{jk}$ of $D$ by the corresponding entries in the columns $\beta_{ik}$ and $\beta_{jj}$ of $\Jac_2(F)$ respectively.

If $m_i=4$, then by Lemma \ref{lem:doubleindices}, $j$ is not an index for the remaining column labels. Applying Lemma \ref{lem:ExpansionMaximalMinorB} \eqref{lem-item:mi=4} to $M=D$ leads to $\det(M) + \det(N) = Q_{ij;ik}(F)\det(A)$, where $A$ is the submatrix of $M$ obtained by removing the rows $\beta_0$, $\beta_{i}$, $\beta_j$ and the columns $\beta_{ii}$, $\beta_{ij}$, $\beta_{jk}$. Since the submatrix $A$ contains no row $\beta_0$, its entries are elements of $\mathcal{J}_1(F)$. Moreover, since the dimension of $A$ is $r + 1 -3 = r-2$, the determinant $\det(A)$ belongs to $\mathcal{J}_1(F)^{r - 2}$. 
Therefore, to show that $\det(D) \in \mathcal{J}_1(F)^{r - 2} \mathcal{Q}(F)$, it suffices to show that $\det(N)  \in \mathcal{J}_1(F)^{r - 2} \mathcal{Q}(F)$. 
From the construction of $N$, the column index $j$ appears only twice and is already in the column label $\beta_{jj}$.
Then by Lemma \ref{lem:MinorReduction}, up to a sign difference, $\det(N) = f_j\det(P)$ for an $r$-by-$r$ submatrix $P$ obtained by removing the row $\beta_j$ and the column $\beta_{jj}$. 
Moreover, each nonzero row index of $P$ appears in at least two column labels, and a column index of $P$ is also a row index of $P$ according to the definition of $D$. 
Since the dimension of $P$ is $d = r = (r - 1) + 1$, by the induction assumption, $\det(P)\in \mathcal{J}_1(F)^{(r-1) - 2}\mathcal{Q}(F)$. Consequently, $\det(N)\in \mathcal{J}_1(F)^{r-2}\mathcal{Q}(F)$ and so is $\det(D)$.

If $m_i=3$, then $i$ is not an index for the remaining column labels. 
Applying Lemma \ref{lem:ExpansionMaximalMinorB} \eqref{lem-item:mi=3} results in 
\[\det(D) \equiv \det(N) \mod \mathcal{J}_1(F)^{r-2}\mathcal{Q}(F).\] 
If $m_j = 2$, then similar to the case where $m_i=4$, we know that $\det(N)\in \mathcal{J}_1(F)^{r-2}\mathcal{Q}(F)$ by induction and hence $\det(D)\in \mathcal{J}_1(F)^{r-2}\mathcal{Q}(F)$. If $m_j=3$, then $m_k=2$. Then $D$ has another column $\beta_{kl}$, where $l\ne i$. 
If $l = j$, then $\det(D)=0$. 
Since $m_k=2$ and $D$ already has a column $\beta_{jk}$ by the assumption, the index $l$ must be different from $k$. 
Therefore, we may suppose $l\not\in\{i,j,k\}$. Then $m_l=2$ and $D$ has another column $\beta_{lm}$. Consequently, $N$ is permutation equivalent to a matrix $N'$ so that the columns $\beta_{ik}$, $\beta_{kl}$ and $\beta_{lm}$ of $N$ are the first three columns of $N'$, and the rows $\beta_0$, $\beta_i$, $\beta_l$,  $\beta_k$ of $N$ are the first four rows of $N'$. Applying Lemma \ref{lem:ExpansionMaximalMinorA} to $M=N'$ implies that 
\[\det(N') \equiv \det(N'') \mod \mathcal{J}_1(F)^{r-2}\mathcal{Q}(F),\] 
where $N''$ is the matrix corresponding to $N$ in that lemma. Note that $N''$ has the columns $\beta_{kk}$ and $m_k=2$. Therefore, by Lemma \ref{lem:MinorReduction} and the induction assumption, $\det(N'')\in \mathcal{J}_1(F)^{r-2}\mathcal{Q}(F)$. Consequently, $\det(N)$ and $\det(N')$ are in $\mathcal{J}_1(F)^{r-2}\mathcal{Q}(F)$.

\paragraph{\textbf{Case 2:}} 
Suppose that $D$ contains no column label in the form $\beta_{aa}$. By Lemma \ref{lem:doubleindices}, $D$ has a nonzero row index $i$ with $m_i\ge 3$ and at most another row index $a$ with $m_a=3$ provided that $m_i=3$. Therefore, there is a row index $j$ such that $D$ has a column $\beta_{ij}$ with $m_j=2$. We may assume that $\beta_{jk}$ is another column of $D$. Similarly, we may also assume that $m_k = 2$ and, hence, $D$ has a third column $\beta_{kl}$. Since $m_j = m_k = 2$, they do not appear in other column labels. Let $N$ be the matrix obtained from $D$ by replacing the columns $\beta_{ij}$, $\beta_{jk}$ and $\beta_{kl}$ by the columns $\beta_{il}$, $\beta_{kk}$ and $\beta_{jj}$. By Lemma \ref{lem:ExpansionMaximalMinorA}, 
$\det(D) + \det(N) = Q_{ik; jl}(F)\det(A)$. Similar to Case 1,  $\det(A) \in \mathcal{J}_1(F)^{r -2}$ and $\det(D)\in \mathcal{J}_1(F)^{r -2} \mathcal{Q}(F)$ if $\det(N) \in \mathcal{J}_1(F)^{r -2} \mathcal{Q}(F)$. Since $m_j = 2$ and $m_k = 2$, applying Lemma \ref{lem:MinorReduction} to $N$ results in $\det(N)=f_jf_k\det(P)$ for an $r - 1$ dimensional matrix $P$ with the properties that each nonzero row index appears in at least two column labels and that a column index is also a row index. 
By the induction assumption, it follows that $\det(P)\in \mathcal{J}_1(F)^{(r-2) - 2}\mathcal{Q}(F)$. Consequently, $\det(N)\in \mathcal{J}_1(F)^{r-2}\mathcal{Q}(F)$, and so is $\det(D)$.

Therefore, $\det(M)\in \mathcal{J}_1(F)^{n-2}\mathcal{Q}(F)$.
\end{proof}

\begin{proposition}\label{prop:ExistenceMinors}
    Let $F$ be a holomorphic function of $n$ variables. Then \[(\mathcal{J}_1(F))^{n-2}\mathcal{Q}(F)\subseteq\mathcal{J}_2(F).\]
\end{proposition}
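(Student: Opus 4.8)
The plan is to prove the inclusion on generators. Because $\mathcal{J}_1(F)=(f_1,\dots,f_n)$, the ideal $\mathcal{J}_1(F)^{n-2}\mathcal{Q}(F)$ is generated by the products $g\,Q_{ij;kl}(F)$, where $g$ ranges over monomials of degree $n-2$ in $f_1,\dots,f_n$ and $Q_{ij;kl}(F)$ ranges over the generators of $\mathcal{Q}(F)$, which we may take to have $i<j$ and $k<l$. So it suffices to place each product $g\,Q_{ij;kl}(F)$ in $\mathcal{J}_2(F)$, and I would do this by exhibiting it (up to a nonzero scalar, and up to summands already seen to be maximal minors) as the determinant of a maximal square submatrix of $\Jac_2(F)$. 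The uniform mechanism is the following: choose a small set of columns carrying the relevant $Q$-pattern and complete it to an $(n+1)$-column submatrix by a tail of columns $\beta_{a c_a}$, one for each remaining row $\beta_a$. The expansion lemmas of Section 3 then evaluate the determinant as $(\text{const})\cdot Q_{ij;kl}(F)\cdot\det(A)$, where $A$ is the tail block, a submatrix not containing the row $\beta_0$. By Lemma \ref{lem:detSubmatrices}, assigning the prescribed second indices to the rows in increasing order makes $\det(A)$ equal to any prescribed product of first partials; I will use this to match $\det(A)$ to $g$ (or to $g$ divided by whatever partials the lemma factors out).

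When $i,j,k,l$ are distinct I claim two complementary situations cover all $g$. If $g$ is divisible by both partials of one pair — say $f_if_j\mid g$ — then Lemma \ref{lem:ExpansionMaximalMinorijkl_ij} yields a single maximal minor equal to $2f_if_j\,Q_{ij;kl}(F)\det(A)$ with $\det(A)=g/(f_if_j)$, so $g\,Q_{ij;kl}(F)\in\mathcal{J}_2(F)$; the case $f_kf_l\mid g$ is the same after $Q_{ij;kl}(F)=Q_{kl;ij}(F)$. Otherwise $g$ omits at least one index of $\{i,j\}$ and at least one of $\{k,l\}$, hence omits a whole transversal pair, and the corresponding sign-variant of Lemma \ref{lem:ExpansionMaximalMinorA} writes $Q_{ij;kl}(F)\det(A)$, with $\det(A)=g$, as a sum $\det(M)+\det(N)$ of two maximal minors. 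The two situations are exhaustive, since $g$ fails to omit a transversal pair exactly when both members of some within-pair divide $g$.

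For the overlapping generators I would split off the index coincidences. The totally overlapping $Q_{ij;ij}(F)$ is handled at once by Lemma \ref{lem:ExpansionMaximalMinoriijj}, which realizes $g\,Q_{ij;ij}(F)$ as twice a maximal minor with $\det(A)=g$ for every $g$. For $Q_{ij;ik}(F)$ with the single shared index $i$, I distinguish whether $f_i\mid g$. If $f_i\mid g$, I realize the product as one block-upper-triangular maximal minor whose core is $D=\Jac_2(F)([\beta_0,\beta_i,\beta_j,\beta_k],[\beta_{ii},\beta_{ij},\beta_{jk},\beta_{ik}])$, which satisfies $\det(D)=f_i\,Q_{ij;ik}(F)$ by the computation in the proof of Proposition \ref{prop:CalcuationMinors}, and whose diagonal tail blocks realize $g/f_i$ as an arbitrary degree-$(n-3)$ product of first partials; the lower-left block vanishes because the core columns, having both indices in $\{i,j,k\}$, are zero outside the core rows, so the determinant factors as $\det(D)$ times the tail. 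If $f_i\nmid g$, then $g$ omits the shared index and Lemma \ref{lem:ExpansionMaximalMinorB}\eqref{lem-item:mi=3} applies, giving $Q_{ij;ik}(F)\det(A)=\det(M)+\det(N)+\tfrac12 Q_{ij;ij}(F)\det(B)$ with $\det(A)=g$; the last term lies in $\mathcal{J}_2(F)$ by the totally overlapping case, so $g\,Q_{ij;ik}(F)\in\mathcal{J}_2(F)$.

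The crux, and the step I expect to be the main obstacle, is the overlapping family, specifically the saturated monomials $g$ divisible by all three of $f_i,f_j,f_k$ for $Q_{ij;ik}(F)$. A direct appeal to Lemma \ref{lem:ExpansionMaximalMinorB} only disposes of $g$ omitting one of $i,j,k$, and trying to reduce the saturated range through the syzygy $f_m Q_{ij;kl}(F)=f_l Q_{ij;km}(F)-f_k Q_{ij;lm}(F)$ fails: the rewriting merely shuffles the shared index among $i,j,k$ and cycles without decreasing any useful quantity. What breaks the impasse is that $f_i\,Q_{ij;ik}(F)$ is itself a core determinant; padding that core with an arbitrary diagonal tail settles every $g$ with $f_i\mid g$ in a single minor, while the complementary range $f_i\nmid g$ is precisely where Lemma \ref{lem:ExpansionMaximalMinorB}\eqref{lem-item:mi=3} is unobstructed. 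A last, routine point to verify is that the tail can be chosen with distinct columns while realizing a prescribed product: ordering the tail rows $a_1<\dots<a_N$ and assigning weakly increasing second indices $c_1\le\dots\le c_N$, any coincidence $\{a_s,c_s\}=\{a_t,c_t\}$ with $s<t$ would force $c_s=a_t$ and $c_t=a_s$, whence $c_t=a_s<a_t=c_s$, contradicting $c_s\le c_t$; so the columns are distinct and Lemma \ref{lem:detSubmatrices} gives $\det(A)=\prod_t f_{c_t}$.
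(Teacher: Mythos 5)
Your proposal is correct and follows essentially the same route as the paper's: both reduce to the generators $g\,Q_{ij;kl}(F)$ and realize each as a maximal minor or a sum of maximal minors via the expansion lemmas of Section 3, splitting on how the support of $g$ meets $\{i,j,k,l\}$ and using Lemma \ref{lem:detSubmatrices} to make the tail block realize the prescribed monomial. The only difference is organizational: your two-case dichotomy for distinct $i,j,k,l$ (either $g$ is divisible by both members of one pair, so Lemma \ref{lem:ExpansionMaximalMinorijkl_ij} applies, or $g$ omits a transversal pair, so Lemma \ref{lem:ExpansionMaximalMinorA} applies) merges the paper's Subcases 3.1--3.4 into two.
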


\begin{proof}
Since $\mathcal{Q}(F)$ is generated by $Q_{ij;kl}(F)$ with $1\le i < j\le n$, $1\le k < l\le n$,
it suffices to show that
 $Q_{ij;kl}(F) \prod\limits_{a = 1}^{n-2}f_{j_a}$ is a combination of maximal minors of $\Jac_2(F)$ for any pairs $1\le i < j\le n$, $1\le k < l\le n$ and any sequence $1\le j_1\le \cdots\le j_{n-2} \le n$.
 
Let $U := \bigcup\limits_{a=1}^{n-2}\{j_a\}.$ Denote by $c$ the cardinality of the set $\{i\}\cup\{j\}\cup \{k\}\cup \{l\}$. Then $2\le c \le 4$.

The idea to prove the proposition is to find a matrix
\[
M = \begin{pmatrix}
    D & \ast\\
    0 & A
\end{pmatrix}
\]
such that $\det(M)=\det(D)\det(A)$,  $\det(D)= Q_{ij;kl}(F) P(F)$, and $\det(A) = \left(\prod\limits_{a = 1}^{n-2}f_{j_a}\right) / P(F)$, where $P(F)$ is a factor of $\prod\limits_{a = 1}^{n-2}f_{j_a}$.

For the idea to work, the row indices of $M$ must be all different, which we will assume.

\paragraph{\textbf{Case 1:} $c=2$.}
Since $i<j$, $k<l$ and $c=2$, we must have $(i, j) = (k, l)$. It suffices to show that $Q_{ij;ij}(F) \prod\limits_{a=1}^{n-2} f_{j_a} = \det(M)$ for an $(n+1)$-by-$(n+1)$ submatrix $M$ of $\Jac_2(F)$.

Let
\[
M = \Jac_2(F)\left([\beta_0, \beta_{i}, \beta_{j}, \beta_{i_1}, \dots, \beta_{i_{n-2}}], [\beta_{ii}, \beta_{ij},\beta_{jj}, \beta_{i_1j_1},\dots, \beta_{i_{n-2}j_{n-2}}]\right),
\] 
where $1\le i_1 <  \cdots < i_{n-2} \le n$.
Note that $M$ is a block upper triangular matrix whose diagonal blocks are $M([\beta_0, \beta_{i}, \beta_{j}], [\beta_{ii}, \beta_{ij},\beta_{jj}])$ and $M([\beta_{i_1}, \dots, \beta_{i_{n-2}}], [\beta_{i_1j_1},\dots, \beta_{i_{n-2}j_{n-2}}])$. By Lemma  \ref{lem:ExpansionMaximalMinoriijj}  and Lemma \ref{lem:detSubmatrices},  
\[
Q_{ij;ij}(F)\prod\limits_{a=1}^{n-2} f_{j_a} = 2\det(M) \in \mathcal{J}_2(F) \text{ if } c = 2.
\]

\paragraph{\textbf{Case 2:} $c=3$.} 
Since $i<j$ and $c=3$, either $i=k$, $i=l$, $j=k$ or $j=l$. Correspondingly, $Q_{ij;kl}(F) = Q_{ij;il}(F)$, $Q_{ij;kl}(F) = Q_{ij;ki}(F) = -Q_{ij;ik}(F)$, $Q_{ij;kl}(F) = Q_{ij;jl}(F) = -Q_{ji;jl}(F)$, or $Q_{ij;kl}(F) = Q_{ij;kj}(F) = Q_{ji;jk}(F)$. 
Note that up to a sign difference, $Q_{ij;kl}(F)$ is equivalent to $Q_{ab;ac}(F)$ for $a\in \{i,j\}$ and $\{b, c\} = \big(\{i\}\cup\{j\}\cup \{k\}\cup \{l\}\big) \setminus \{a\}$. 
In the following, we only show that $Q_{ij;ik}(F)\prod\limits_{a = 1}^{n-2}f_{j_a} \in \mathcal{J}_2(F)$ and other cases can be proved similarly.

\paragraph{\textbf{Subcase 2.1:} Suppose $i=j_a$ for some $1\le a \le n-2$.}

Let
\[
M = \Jac_2(F)\left([\beta_0, \beta_{i}, \beta_{j}, \beta_{k}, \beta_{i_1}, \dots, \widehat{\beta_{i_a}},\dots, \beta_{i_{n-2}}], [\beta_{ii}, \beta_{ij}, \beta_{jk}, \beta_{ik},  \beta_{i_1j_1},\dots, \widehat{\beta_{i_aj_a}}, \dots \ \beta_{i_{n-2}j_{n-2}}]\right),
\]
\[
D = M\left([\beta_0, \beta_{i}, \beta_{j}, \beta_{k}], [\beta_{ii}, \beta_{ij}, \beta_{jk}, \beta_{ik}]\right), \text{ and }
A=M([\beta_{i_1}, \dots, \widehat{\beta_{i_a}},\dots, \beta_{i_{n-2}}], [\beta_{i_1j_1},\dots, \widehat{\beta_{i_aj_a}}, \dots \ \beta_{i_{n-2}j_{n-2}}],
\]
where $1 \le i_1 < \cdots < i_{n-2} \le n$. 
Then 
\[M = \begin{pmatrix}
    D & *\\
    0 & A
\end{pmatrix},\quad
\det(M)=\det(D)\det(A),\quad \text{ and }\quad
\det(A) = \left(\prod_{a=1}^{n-2}f_{j_a}\right)/(f_i).\]
Applying Lemma \ref{lem:ExpansionMaximalMinorB} \eqref{lem-item:mi=4} to $M=D$, along with the consequence that $N$ has duplicate columns $\beta_{ik}$, implies that 
\[\det(D)= f_iQ_{ij;ik}(F) - \det(N) = f_iQ_{ij;ik}(F).\]
Therefore,
\[
Q_{ij;ik}(F)\prod_{a=1}^{n-2}f_{j_a}= \det(D)\det(A) = \det(M)\in\mathcal{J}_2(F) ~\text{if}~ i\in U.
\]

\paragraph{\textbf{Subcase 2.2:} $i\not\in U$.} 
Let
\[
M = \Jac_2(F)\left([\beta_0, \beta_{i}, \beta_{j}, \beta_{i_1}, \dots, \beta_{i_{n-2}}], [\beta_{ii}, \beta_{ij},\beta_{jk}, \beta_{i_1j_1},\dots, \beta_{i_{n-2}j_{n-2}}]\right),
\] 
and
\[
N = 
\Jac_2(F)\left([\beta_0, \beta_{i}, \beta_{j}, \beta_{i_1}, \dots, \beta_{i_{n-2}}], [\beta_{ii}, \beta_{ik},\beta_{jj}, \beta_{i_1j_1}, \dots, \beta_{i_{n-2}j_{n-2}}]\right),
\]
where $1 \le i_1 < \cdots < i_{n-2} \le n$.
Let $A=M([\beta_{i_1}, \dots, \beta_{i_{n-2}}], [\beta_{i_1 j_1},\dots, \beta_{i_{n-2}j_{n-2}}])$.
By Lemma \ref{lem:detSubmatrices}, we have
\[
\det(A) = \prod_{a=1}^{n-2}f_{j_a}. 
\]
Note that $i\not\in \{i_1,\cdots, i_{n-2}\}\bigcup U$.
Then by Lemma \ref{lem:ExpansionMaximalMinorB} \eqref{lem-item:mi=3}, it follows that 
\[Q_{ij;ik}(F) \prod_{a=1}^{n-2}f_{j_a} = Q_{ij;ik}(F)\det(A) = \det(M) + \det(N) + \frac{1}{2} Q_{ij;ij}(F)\det(B).\]
Since the nonzero entries of $B$ are in $\mathcal{J}_1(F)$ and $B$ is of dimension $n-2$, $\det(B)\in\mathcal{J}_1(F)^{n-2}$. By Case 1, 
\[Q_{ij;ij}(F)\det(B) \in Q_{ij;ij}(F)\mathcal{J}_1(F)^{n-2}\subseteq \mathcal{J}_2(F).\]
Therefore, 
\[Q_{ij;ik}(F) \prod_{a=1}^{n-2}f_{j_a} \in \mathcal{J}_2(F) ~\text{if}~ i\not\in U.\]

It then follows from Subcase 2.1 and Subcase 2.2 that
\[
Q_{ij;ik}(F) \prod_{a=1}^{n-2}f_{j_a} \in \mathcal{J}_2(F)
~\text{if}~ c=3.
\]
\paragraph{\textbf{Case 3:} $c=4$.} In this case, we focus on constructing $M$ and $N$ such that $\det(M)$ or $\det(M)\pm \det(N)$ is in the form $Q_{ij;kl}(F)P(F)\det(A) = Q_{ij;kl}(F)\prod\limits_{a=1}^{n-2}f_{j_a}$, where $P(F)$ is a factor of $\prod\limits_{a=1}^{n-2}f_{j_a}$, and $A$ is a submatrix of $M$.

\paragraph{\textbf{Subcase 3.1:} $t:=j_a=j_b\in \{i,j,k,l\}$.} 

If $t \in \{i, j\}$, then we let
\[
\begin{aligned}
M = & \Jac_2(F)\left([\beta_0, \beta_{i}, \beta_{j}, \beta_{k}, \beta_{l}, \beta_{i_1}, \dots, \widehat{\beta_{i_a}}, \dots, \widehat{\beta_{i_b}}, \dots, \beta_{i_{n-2}}],\right. \\
&\hspace{1in}\left.[\beta_{tt}, \beta_{ik},\beta_{il},\beta_{jk}, \beta_{jl}, \beta_{i_1j_1},\dots, \widehat{\beta_{i_a j_a}},\dots, \widehat{\beta_{i_b j_b}},\dots, \beta_{i_{n-2}j_{n-2}}]\right),
\end{aligned}
\]
where $1 \le i_1 < \cdots < i_{n-2} \le n$. 

If $t \in \{k, l\}$, then we may choose columns with $i$ switched with $k$ and $j$ switched with $l$ in the above defined $M$. More precisely, we let
\[
\begin{aligned}
M = & \Jac_2(F)\left([\beta_0, \beta_{k}, \beta_{l}, \beta_{i}, \beta_{j}, \beta_{i_1}, \dots, \widehat{\beta_{i_a}}, \dots, \widehat{\beta_{i_b}}, \dots, \beta_{i_{n-2}}],\right. \\
&\hspace{1in}\left.[\beta_{ki},\beta_{kj},\beta_{li}, \beta_{lj}, \beta_{tt}, \beta_{i_1j_1},\dots, \widehat{\beta_{i_a j_a}},\dots, \widehat{\beta_{i_b j_b}},\dots, \beta_{i_{n-2}j_{n-2}}]\right),
\end{aligned}
\]
where $1 \le i_1 < \cdots < i_{n-2} \le n$.

Let
\[
A=M([\beta_{i_1}, \dots, \widehat{\beta_{i_a}}, \dots, \widehat{\beta_{i_b}}, \dots, \beta_{i_{n-2}}], [\beta_{i_1j_1},\dots, \widehat{\beta_{i_a j_a}},\dots, \widehat{\beta_{i_b j_b}},\dots, \beta_{i_{n-2}j_{n-2}}]).
\]

Applying Lemma \ref{lem:detSubmatrices} to $A$ results in
\[
\det(A) = \left(\prod_{a=1}^{n-2}f_{j_a}\right)/f_t^2. 
\]

Because $Q_{ij;kl}(F) = -Q_{ji;kl}(F) = Q_{kl;ij}(F) = -Q_{lk;ij}(F)$, applying Lemma \ref{lem:ExpansionMaximalMinorijkl_ii} to $M$---directly when $t \in \{i, k\}$, or with necessary row and column switching when $t\in\{j,l\}$---leads to
\[\det(M) = f_t^2 Q_{ij;kl}(F) \det(A) = Q_{ij;kl}(F) \prod_{a=1}^{n-2}f_{j_a}. 
\]
Therefore, 
\[
Q_{ij;kl}(F) \prod_{a=1}^{n-2}f_{j_a} \in \mathcal{J}_2(F)
\text{ if there exist } j_a=j_b\in \{i,j,k,l\}.
\]

\paragraph{\textbf{Subcase 3.2:} $\{j_a, j_b\} \in \{\{i, j\}, \{k, l\}\}$.}

Because $Q_{ij;kl}(F) = -Q_{ji;kl}(F) = -Q_{ij;lk}(F) = Q_{kl;ij}(F)$, switching $j_a$ with $j_b$ will only result in a sign change. Therefore, we may assume that $(j_a, j_b)=(i,j)$ or $(j_a, j_b) = (k,l)$.

If $(j_a, j_b)=(i,j)$, we let
\[
\begin{aligned}
M = & \Jac_2(F)\left([\beta_0, \beta_{i}, \beta_{j}, \beta_{k}, \beta_{l}, \beta_{i_1}, \dots, \widehat{\beta_{i_a}}, \dots, \widehat{\beta_{i_b}}, \dots, \beta_{i_{n-2}}],\right. \\
&\hspace{1in}\left.[\beta_{ij}, \beta_{ik}, \beta_{il},\beta_{jk}, \beta_{jl}, \beta_{i_1j_1},\dots, \widehat{\beta_{i_a j_a}},\dots, \widehat{\beta_{i_b j_b}},\dots, \beta_{i_{n-2}j_{n-2}}]\right).
\end{aligned}
\]

If $(j_a,j_b)=(k, l)$, 
we let
\[
\begin{aligned}
M = & \Jac_2(F)\left([\beta_0, \beta_{k}, \beta_{l}, \beta_{i}, \beta_{j}, \beta_{i_1}, \dots, \widehat{\beta_{i_a}}, \dots, \widehat{\beta_{i_b}}, \dots, \beta_{i_{n-2}}],\right. \\
&\hspace{1in}\left.[\beta_{kl}, \beta_{ki}, \beta_{kj},\beta_{li}, \beta_{lj}, \beta_{i_1j_1},\dots, \widehat{\beta_{i_a j_a}},\dots, \widehat{\beta_{i_b j_b}},\dots, \beta_{i_{n-2}j_{n-2}}]\right).
\end{aligned}
\]

Let 
\[
A=M([\beta_{i_1}, \dots, \widehat{\beta_{i_a}}, \dots, \widehat{\beta_{i_b}}, \dots, \beta_{i_{n-2}}], [\beta_{i_1j_1},\dots, \widehat{\beta_{i_a j_a}},\dots, \widehat{\beta_{i_b j_b}},\dots, \beta_{i_{n-2}j_{n-2}}]),
\]
and $D$ the complement submatrix of $A$ in $M$.

By Lemma \ref{lem:detSubmatrices}, we have
\[
\det(A) = \left(\prod_{a=1}^{n-2}\right)/(f_{j_a}f_{j_b}).
\]
Applying Lemma \ref{lem:ExpansionMaximalMinorijkl_ij} to $M$ leads to
\[\frac{1}{2}\det(M) = \frac{1}{2}\det(D)\det(A) = f_{j_a}f_{j_b} Q_{ij;kl}(F)\left(\prod_{a=1}^{n-2}\right)/(f_{j_a}f_{j_b}) = Q_{ij;kl}(F)\prod_{a=1}^{n-2}f_{j_a}.\]
Therefore, 
\[
Q_{ij;kl}(F)\prod_{a=1}^{n-2}f_{j_a} \in \mathcal{J}_2(F)
\text{ if } \{j_a, j_b\} \in \{\{i, j\}, \{k, l\}\}.
\]

\paragraph{\textbf{Subcase 3.3:} $\{j_a, j_b\} \cap \{i, j, k, l\} \neq\emptyset$ but $\{j_a,j_b\}\not\in\{\{i,j\}, \{k,l\}\}$.} 

Because switching $i$ with $j$ or $k$ with $l$ in $Q_{ij;kl}(F)$ only results in a sign change, we may assume that $j_a\in \{i,k\}$, otherwise, we take $j_b$ as $j_a$.

Since $(j_a, j_b)\in \{(i,j), (k,l)\}$ has been considered in Subcase 3.2, if $j_a=i$, then we can assume that $j_b\neq j$, and $\{k, l\} \not\subset U$, that is, $k\not\in U$ or $l\not\in U$. Similarly, if $j_a = k$, then we may assume that $j_b\neq l$ and $i \not\in U$ or $j\not\in U$.

Suppose that $j_a = i$, $j\not\in U$, and $k\not\in U$. Let
\[
\begin{aligned}
M = & \Jac_2(F)\left([\beta_0, \beta_{i}, \beta_{j}, \beta_{k}, \beta_l, \beta_{i_1}, \dots, \widehat{\beta_{i_a}},\dots, \widehat{\beta_{i_b}}, \dots, \beta_{i_{n-2}}]\right.,\\
& \hspace{1in}\left.[\beta_{ik}, \beta_{jk}, \beta_{jl}, \beta_{ii},  \beta_{i_1j_1},\dots, \widehat{\beta_{i_a j_a}},\dots, \beta_{i_{n-2}j_{n-2}}]\right),
\end{aligned}
\]
\[
\begin{aligned}
N = & \Jac_2(F)\left([\beta_0, \beta_{i}, \beta_{j}, \beta_{k}, \beta_l, \beta_{i_1}, \dots, \widehat{\beta_{i_a}}, \cdots, \widehat{\beta_{i_b}}, \dots, \beta_{i_{n-2}}]\right.,\\
& \hspace{1in}\left.[\beta_{il}, \beta_{jj}, \beta_{kk}, \beta_{ii},  \beta_{i_1j_1},\dots, \widehat{\beta_{i_a j_a}},\dots, \beta_{i_{n-2}j_{n-2}}]\right),
\end{aligned}
\]
\[A=M([\beta_i, \beta_l,  \beta_{i_1}, \dots, \widehat{\beta_{i_a}}, \dots, \widehat{\beta_{i_b}}, \dots, \beta_{i_{n-2}}],[\beta_{ii},\beta_{i_1j_1},\dots, \widehat{\beta_{i_a j_a}},\dots, \beta_{i_{n-2}j_{n-2}}]),\]
and $B=A([\beta_l, \beta_{i_1}, \dots, \widehat{\beta_{i_a}}, \dots, \widehat{\beta_{i_b}},\dots,\beta_{i_{n-2}}],[\beta_{i_1j_1}, \dots, \widehat{\beta_{i_a j_a}},\dots,  \beta_{i_{n-2}j_{n-2}}])$,
where $1 \le i_1 < \cdots < i_{n-2} \le n$ and $\{i, l\} = \{i_a, i_b\}$. Without loss of generality, in the matrices defined above, we may assume that $i_a = i$. If instead $i_b = i$, after switching the label $a$ with the label $b$ in the above setting, the subsequent arguments remain valid under this relabeling. 

Notice that $j, k\not\in U\cup\{i_1, \dots, i_{n-2}\}$.
Applying Lemma \ref{lem:ExpansionMaximalMinorA} to $M$ and $N$ results in
\[
\det(M)+\det(N)= -Q_{ij;kl}(F)\det(A).
\]
Applying Lemma \ref{lem:detSubmatrices} to $B$ leads to 
\[\det(A)=f_i\cdot (-1)^{b - 2} \det(B) = (-1)^{b} f_i\left(\prod_{a=1}^{n-2}f_{j_a}\right)/f_{j_a} =  (-1)^{b} \prod_{a=1}^{n-2}f_{j_a}.\]
Therefore,
\[
Q_{ij;kl}(F)\prod_{a=1}^{n-2}f_{j_a} = (-1)^{b+1}(\det(M)+\det(N)) \in \mathcal{J}_2(F).
\]

Suppose $j_a = i$, $j\not\in U$ and $l\not\in U$. Switching $k$ with $l$ in the above defined matrices and applying the same arguments leads to 
\[
Q_{ij;lk}(F)\prod_{a=1}^{n-2}f_{j_a} \in \mathcal{J}_2(F).
\]

If $j_a = k$, then applying the same arguments to $M$ and $N$ obtained from the original $M$ and $N$ by switching $i$ with $k$ and $j$ with $l$ leads to
\[
Q_{kl;ij}(F)\prod_{a=1}^{n-2}f_{j_a} \in \mathcal{J}_2(F) \quad \text{ or } \quad Q_{kl;ji}(F)\prod_{a=1}^{n-2}f_{j_a} \in \mathcal{J}_2(F).
\]

Therefore, 
\[
Q_{ij;kl}(F)\prod_{a=1}^{n-2}f_{j_a} \in \mathcal{J}_2(F) \text{ if } \{j_a, j_b\} \cap \{i, j, k, l\} \neq\emptyset \text{ and } \{j_a,j_b\}\not\in\{\{i,j\}, \{k,l\}\}.
\]

Up to now, through the subcases 3.1-3.3, we have proved that
\[
Q_{ij;kl}(F)\prod_{a=1}^{n-2}f_{j_a} \in \mathcal{J}_2(F) \text{~if~} \{i,j,k,l\} \cap \bigcup_{a=1}^{n-2}\{j_a\} \ne \emptyset.
\]
\paragraph{\textbf{Subcase 3.4:} $\{i, j, k, l\}\cap U =\emptyset$.} 

Let
\[
M =  \Jac_2(F)\left([\beta_0, \beta_{i}, \beta_{j}, \beta_{k}, \beta_{l}, \beta_{i_1}, \dots, \widehat{\beta_{i_a}}, \dots, \widehat{\beta_{i_b}}, \dots, \beta_{i_{n-2}}],[\beta_{ik}, \beta_{jk}, \beta_{jl}, \beta_{i_1j_1},\dots, \beta_{i_{n-2}j_{n-2}}]\right),
\]
\[
N =  \Jac_2(F)\left([\beta_0, \beta_{i}, \beta_{j}, \beta_{k}, \beta_{l}, \beta_{i_1}, \dots, \widehat{\beta_{i_a}}, \dots, \widehat{\beta_{i_b}}, \dots, \beta_{i_{n-2}}],
[\beta_{il}, \beta_{jj}, \beta_{kk}, \beta_{i_1j_1},\dots, \beta_{i_{n-2}j_{n-2}}]\right),
\]
and $A=M\left([\beta_i, \beta_l, \beta_{i_1}, \dots, \widehat{\beta_{i_a}}, \dots, \widehat{\beta_{i_b}}, \dots, \beta_{i_{n-2}}], [\beta_{i_1j_1},\dots, \beta_{i_{n-2}j_{n-2}}]\right)$,
where $1\le i_1\le\cdots\le i_{n-2}\le n$, $i_a = i$ and $i_b =l$. Similar to Subcase 3.3, if $i_a=l$ and $i_b=i$, after switching the label $a$ with the label $b$ in the above setting, the subsequent arguments remain valid under this relabeling.

By Lemma \ref{lem:detSubmatrices}, we have
\[
\det(A) = (-1)^{(a+b-1)}\prod_{a=1}^{n-2}f_{j_a}.
\]
Since $j, k\not\in U \cup \{i_1,\dots, i_{n-2}\}$, by Lemma \ref{lem:ExpansionMaximalMinorA},
\[
\det(M)+\det(N) = - Q_{ij;kl}(F) \det(A) = (-1)^{a+b} Q_{ij;kl}(F) \prod_{a=1}^{n-2}f_{j_a}
\]
which implies
\[
Q_{ij;kl}(F)\prod_{a=1}^{n-2}f_{j_a} \in \mathcal{J}_2(F) \text{ if }  \{i, j, k, l\} \cap U = \emptyset.
\]

From Case 1-3, we see that
\[Q_{ij;kl}(F)\prod_{a=1}^{n-2}f_{j_a}\in \mathcal{J}_2(F) ~\text{for}~ 1\le i < j\le n, 1\le k < l\le n, \text{ and } 1\le j_1 \le \cdots \le j_{n-2} \le n.\]
Therefore,
\[
\mathcal{J}_1(F)^{n-2}\mathcal{Q}(F)\subseteq \mathcal{J}_2(F).
\]

\end{proof}

Lemma \ref{lem:detSubmatrices}, Proposition \ref{prop:CalcuationMinors}, and Proposition \ref{prop:ExistenceMinors} together lead to the following theorem which has been known for $n=2$ \cite{Hussain2023} and $n=3$ \cite{Ramirez2024}.

\begin{theorem}\label{thm:main}
    Let $F$ be a holomorphic function of $n$ variables. The second Jacobian ideal $\mathcal{J}_2(F)$ admits the following decomposition:
    \[
    \mathcal{J}_2(F) = \mathcal{J}_1(F)^{n+1} + \mathcal{J}_1(F)^{n-2}\mathcal{Q}(F).
    \]
\end{theorem}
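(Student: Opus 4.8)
The plan is to prove the two inclusions separately, sorting the maximal (that is, $(n+1)\times(n+1)$) minors of $\Jac_2(F)$ according to how many of the first-order columns $\beta_{01},\dots,\beta_{0n}$ they involve. The one structural fact I need beyond the two propositions is that each column $\beta_{0k}$ has a single nonzero entry, namely $f_k$ in the row $\beta_0$, and vanishes in every other row.

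For the inclusion $\mathcal{J}_2(F)\subseteq \mathcal{J}_1(F)^{n+1}+\mathcal{J}_1(F)^{n-2}\mathcal{Q}(F)$, I would fix a maximal submatrix $M$ and count its columns of the form $\beta_{0k}$. If $M$ contains two such columns $\beta_{0k}$ and $\beta_{0k'}$, both are supported only on the row $\beta_0$, hence are linearly dependent and $\det(M)=0$, which lies in either ideal trivially. If $M$ contains no column $\beta_{0k}$, then all of its column indices are positive, so $M$ is exactly the type of submatrix treated in Proposition \ref{prop:CalcuationMinors}, giving $\det(M)\in \mathcal{J}_1(F)^{n-2}\mathcal{Q}(F)$. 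In the remaining case $M$ contains exactly one column $\beta_{0k}$; expanding the determinant along that column yields $\det(M)=\pm f_k\det(M')$, where $M'$ is the $n\times n$ submatrix with rows $\beta_1,\dots,\beta_n$ and $n$ columns of positive index. Since $M'$ omits the row $\beta_0$, all its entries are first-order partials, so $\det(M')\in\mathcal{J}_1(F)^n$ and $\det(M)\in\mathcal{J}_1(F)^{n+1}$. Running over all maximal minors gives the desired inclusion.

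For the reverse inclusion I would treat the two summands separately. The containment $\mathcal{J}_1(F)^{n-2}\mathcal{Q}(F)\subseteq\mathcal{J}_2(F)$ is precisely Proposition \ref{prop:ExistenceMinors}. To prove $\mathcal{J}_1(F)^{n+1}\subseteq\mathcal{J}_2(F)$ it suffices to realize each degree-$(n+1)$ monomial $f_{k_0}f_{k_1}\cdots f_{k_n}$ in the first partials as, up to sign, a maximal minor. I would sort the indices so that $k_1\le k_2\le\cdots\le k_n$ and form $M$ from the column $\beta_{0k_0}$ together with the columns $\beta_{1k_1},\beta_{2k_2},\dots,\beta_{nk_n}$. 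These are pairwise distinct: $\beta_{0k_0}$ carries the index $0$ while the others do not, and a coincidence $\{a,k_a\}=\{b,k_b\}$ with $a<b$ would force $k_a=b$ and $k_b=a$, contradicting $k_a\le k_b$. Expanding $\det(M)$ along $\beta_{0k_0}$ and then applying Lemma \ref{lem:detSubmatrices} to the rows $\beta_1,\dots,\beta_n$ gives $\det(M)=\pm f_{k_0}\prod_{a=1}^{n}f_{k_a}$, placing the monomial in $\mathcal{J}_2(F)$.

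The substantive work is already carried out in Proposition \ref{prop:CalcuationMinors} and Proposition \ref{prop:ExistenceMinors}; the genuinely new ingredient in assembling the theorem is the treatment of the first-order columns $\beta_{0k}$, which are excluded from both propositions. I expect the main thing to get right to be the bookkeeping for these columns: that at most one of them can occur in a nonzero minor, that deleting the row $\beta_0$ leaves a matrix of first partials so its $n\times n$ minors land in $\mathcal{J}_1(F)^n$, and that the sorted choice of columns in the final construction is always admissible. None of these steps is difficult, but together they are exactly what closes the gap between the two propositions and the stated decomposition.
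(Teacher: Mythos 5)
Your proposal is correct and follows essentially the same route as the paper: split the maximal minors according to whether they use a column $\beta_{0k}$ (cofactor expansion there gives $\mathcal{J}_1(F)^{n+1}$, and Proposition \ref{prop:CalcuationMinors} handles the rest), and obtain the reverse inclusion from Proposition \ref{prop:ExistenceMinors} together with the explicit minor on the columns $\beta_{0k_0},\beta_{1k_1},\dots,\beta_{nk_n}$ evaluated via Lemma \ref{lem:detSubmatrices}. Your added checks (two $\beta_{0k}$ columns force a zero determinant; the sorted column choice is admissible) are correct refinements of details the paper leaves implicit.
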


\begin{proof}
Let $M$ be an $(n+1)$-by-$(n+1)$ non-degenerate submatrix of $\Jac_2(F)$. 

Note that the entries $(\beta_i,\beta_{0k})$ are zero if $i\neq 0$. If $M$ contains a column $\beta_{0k}$, then $\det(M)=f_k M_{\beta_0,\beta_{0k}}$, where $M_{\beta_0,\beta_{0k}}$ is the cofactor of $M(\beta_0, \beta_{0k})$ and the entries of the associated complement submatrix of $M(\beta_0,\beta_{0k})$ are 0 or $f_a$, where $1\le a\le n$. Therefore, $\det(M)$ is a degree $n+1$ homogeneous polynomial in $f_1$, $\dots$, $f_n$, which implies $\det(M)\in \mathcal{J}_1(F)^{n+1}$. 
Assume that $M$ contains only the columns $\beta_{ab}$, where $1 \le a, b \le n$. 
By Proposition \ref{prop:CalcuationMinors}, the determinant $\det(M)$ is in $\mathcal{J}_1(F)^{n-2}\mathcal{Q}(F)$.
Therefore, 
\[\mathcal{J}_2(F)\subseteq \mathcal{J}_1(F)^{n+1} + \mathcal{J}_1(F)^{n-2}\mathcal{Q}(F).\]

Conversely, for a monomial $\prod\limits_{a=1}^{n+1}f_{j_a}$ with $1\le j_1\le\cdots\le j_{n+1}\le n$, let
\[M=\Jac_2(F)\left(:,[\beta_{0j_1}, \beta_{1j_2}, \dots, \beta_{nj_{n+1}}]\right).\]
By Lemma \ref{lem:detSubmatrices}, it follows that 
\[\prod_{a=1}^{n+1}f_{j_a} = \det(M) \in \mathcal{J}_2(F).\]
Moreover, by Proposition \ref{prop:ExistenceMinors}, $\mathcal{J}_1(F)^{n-2}\mathcal{Q}(F) \subset \mathcal{J}_2(F)$. 
Therefore,
\[
\mathcal{J}_1(F)^{n+1} + \mathcal{J}_1(F)^{n-2}\mathcal{Q}(F)\subseteq \mathcal{J}_2(F).
\]

Consequently,
\[\mathcal{J}_2(F) = \mathcal{J}_1(F)^{n+1} + \mathcal{J}_1(F)^{n-2}\mathcal{Q}(F).\]

\end{proof}

\begin{example}
    Let $F= x^3-y^2\in \mathbb{C}[x, y]$. Then $f_1 = 3x^2$, $f_2 = -2y$, $f_{11} = 6x$, $f_{12}=f_{21}=0$, and $f_{22}=-2$. By Theorem \ref{thm:main}
    \[\mathcal{J}_2(F) = (3x^2, -2y)^3 + (6x (-2y)^2 + (-2)(3x^2)^2).\]
    Compared with \cite[Example 2.1.4]{Le2024}, we notice that \[(3x^2, -2y)^3 + (6x (-2y)^2 +  (-2)(3x^2)^2) = (x^6, x^4y, x^2y^2, y^3, 4xy^2-3x^4).\]
\end{example}

In \cite[Proposition 2.18]{Le2024} it is shown that the $k$ -th Jacobian ideal $\mathcal{J}_k(F)$ is contained in $\mathcal{J}_1(F)^{\binom{n-2+k}{n-1}}$. For $k=2$, this can also be confirmed by Theorem \ref{thm:main}. Note that $\mathcal{Q}(F)\subset \mathcal{J}_1(F)^2\mathcal{S}$, where $\mathcal{S}=(f_{ij})_{1\le i, j\le n}$ is the ideal generated by second-order partial derivatives. It follows that
\[\mathcal{J}_2(F) = \mathcal{J}_1(F)^{n+1} + \mathcal{J}_1(F)^{n-2}\mathcal{Q}(F)\subset \mathcal{J}_1(F)^{n+1} + \mathcal{J}_1(F)^{n}\mathcal{S} = \mathcal{J}_1(F)^n\big(\mathcal{J}_1(F) + \mathcal{S}\big) \subset \mathcal{J}_1(F)^n.\]

Theorem \ref{thm:main} and \cite[Proposition 2.18]{Le2024} suggest that $\mathcal{J}_k(F)$ is closely related to $\mathcal{J}_1(F)$.
\begin{question}
    For $k\ge 3$, does the $k$-th Jacobian ideal $\mathcal{J}_k(F)$ have a decomposition structure similar to $\mathcal{J}_2(F)$?
\end{question}

\section{Higher Nash blow-up local algebras}

In this section, we present an application of Theorem \ref{thm:main} to \cite[Conjecture 1.5]{Hussain2023}. The conjecture in the case that $n=2$ and $k=2$ was verified in the same paper. The conjecture for $n=3$ and $k=2$ was confirmed in \cite{Ramirez2024}. Recently, in \cite{Le2024}, the conjecture's general validity was established using properties of Fitting ideals. In this section, we provide an elementary proof of \cite[Theorem 2.24.]{Le2024} for $k=2$ using Theorem \ref{thm:main}.

We denote by $\mathcal{O}_n=\mathbb{C}\{z_1, \dots, z_n\}$ the algebra of germs of holomorphic functions at the origin of $\mathbb{C}^n$.

In \cite{Hussain2023}, the authors defined higher Nash blow-up local algebras for an isolated hypersurface singularity as follows.

\begin{definition}[{\cite[Definition 1.3]{Hussain2023}}]
    Let $F$ be a holomorphic function of $n$ variables with an isolated singularity at $0\in\mathbb{C}^n$. The \textbf{$k$-th Nash blow-up local algebra} of the hypersurface $V=V(F)$ is defined to be:
    \[
    \mathcal{M}_k(V):= {\mathcal{O}_n}/{(F, \mathcal{J}_k(F))},
    \]
    where $\mathcal{J}_k(F)$ is the $k$-th Jacobian ideal.
\end{definition}

\begin{theorem}\label{thm:conjecturen=3}
    The second Nash blow-up algebra of a hypersurface with an isolated singularity is a contact invariant.
\end{theorem}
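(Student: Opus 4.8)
The theorem asserts that the second Nash blow-up algebra $\mathcal{M}_2(V)=\mathcal{O}_n/(F,\mathcal{J}_2(F))$ is a contact invariant. I need to recall what "contact invariant" means: two hypersurface singularities $V(F)$ and $V(G)$ are contact equivalent if there is an automorphism $\varphi$ of $\mathcal{O}_n$ and a unit $u\in\mathcal{O}_n^\times$ such that $\varphi(F)=uG$ (equivalently, the ideals $(F)$ and $(\varphi^{-1}(G))$ coincide, i.e. the isomorphism class of $\mathcal{O}_n/(F)$ is the invariant). So the claim is that $\mathcal{M}_2(V)$ depends only on the contact class of $F$, i.e. it is invariant under (i) biholomorphic change of coordinates and (ii) multiplication of $F$ by a unit.

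**The plan.** The structure formula from Theorem~\ref{thm:main}, $\mathcal{J}_2(F)=\mathcal{J}_1(F)^{n+1}+\mathcal{J}_1(F)^{n-2}\mathcal{Q}(F)$, is the key reduction: it expresses $\mathcal{J}_2(F)$ purely in terms of $\mathcal{J}_1(F)$ and the ideal $\mathcal{Q}(F)$, both built from first and second partial derivatives. The plan is to check invariance of the pair $(F,\mathcal{J}_2(F))$, modulo $F$, under the two generators of contact equivalence separately. First I would handle coordinate changes: since $\mathcal{J}_1(F)$ and $\mathcal{Q}(F)$ are defined through the Jacobian and Hessian data, I would verify that under a biholomorphic $\varphi$ the Jacobian ideal transforms as $\mathcal{J}_1(\varphi^*F)=\varphi^*\mathcal{J}_1(F)$ (this is classical, via the chain rule and invertibility of the coordinate Jacobian), and then show the analogous transformation for $\mathcal{Q}(\varphi^*F)$ modulo $\mathcal{J}_1(F)^{3}$ or modulo the Jacobian ideal; combined with the decomposition this gives $\mathcal{J}_2(\varphi^*F)=\varphi^*\mathcal{J}_2(F)$, so the quotient algebra is preserved.

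**The unit-multiplication step (the main obstacle).** The harder direction is invariance under $F\mapsto uF$ with $u$ a unit. Here I expect the genuine work to lie. The point is that the ideals $\mathcal{J}_1(F)$ and $\mathcal{Q}(F)$ are \emph{not} literally unchanged by $F\mapsto uF$; instead one must show that they change only by terms lying in $(F)$, so that the quotients $\mathcal{O}_n/(F,\mathcal{J}_2(F))$ and $\mathcal{O}_n/(uF,\mathcal{J}_2(uF))$ agree. Writing $G=uF$, Leibniz gives $\partial_i G=u\,\partial_iF+F\,\partial_iu$, so $\mathcal{J}_1(G)\equiv u\,\mathcal{J}_1(F)\pmod{(F)}$; since $u$ is a unit, $\mathcal{J}_1(G)+(F)=\mathcal{J}_1(F)+(F)$. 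The crucial lemma is the analogous statement for $\mathcal{Q}$: I would compute $Q_{ij;kl}(G)$ using $f_{ab}\mapsto u f_{ab}+(\text{terms with }\partial u,\partial F)+F\partial\partial u$ and verify, by the skew-symmetric cancellation structure already recorded in the Remark for $Q_{ij;kl}$, that $Q_{ij;kl}(G)\equiv u^{3}\,Q_{ij;kl}(F)\pmod{(F,\mathcal{J}_1(F)^{?})}$, with the correction terms absorbed either into $(F)$ or into the powers of $\mathcal{J}_1(F)$ already multiplying $\mathcal{Q}$ in the decomposition. This is exactly where the product structure $\mathcal{J}_1(F)^{n-2}\mathcal{Q}(F)$ pays off: extra first-derivative factors generated by the Leibniz rule can be soaked up by the $\mathcal{J}_1(F)^{n-2}$ factor and the $\mathcal{J}_1(F)^{n+1}$ summand.

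**Assembling the argument.** Once both steps are in place, the conclusion follows formally. I would state and prove a single lemma: for any unit $u$ and any $F$ with isolated singularity, $(uF,\mathcal{J}_2(uF))=(F,\mathcal{J}_2(F))$ as ideals of $\mathcal{O}_n$; and a second lemma giving $\varphi^*(F,\mathcal{J}_2(F))=(\varphi^*F,\mathcal{J}_2(\varphi^*F))$ for biholomorphic $\varphi$. Granting these, if $F$ and $G$ are contact equivalent, write the equivalence as a composite of a coordinate change and a unit multiplication; applying the two lemmas yields an isomorphism $\mathcal{M}_2(V(F))\cong\mathcal{M}_2(V(G))$ of $\mathbb{C}$-algebras, which is the assertion. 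The elementary nature promised in the introduction comes precisely from replacing the Fitting-ideal machinery of \cite{Le2024} with the explicit decomposition of Theorem~\ref{thm:main}, reducing everything to the transformation behavior of $\mathcal{J}_1(F)$ and $\mathcal{Q}(F)$ under the two elementary operations.
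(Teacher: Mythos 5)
Your proposal is correct in its overall architecture and, for the coordinate-change half, follows essentially the same route as the paper: both use the decomposition of Theorem~\ref{thm:main}, the chain rule to get $\mathcal{J}_1(F\circ\phi)\subseteq\phi^*\mathcal{J}_1(F)$, and the observation that the Hessian chain rule produces a correction to $\mathcal{Q}(F\circ\phi)$ lying in $\phi^*(\mathcal{J}_1(F)^3)$, which the factor $\mathcal{J}_1^{n-2}$ pushes into the $\mathcal{J}_1^{n+1}$ summand. (The paper proves only one inclusion and gets equality by running the argument again with $\phi^{-1}$ and $u^{-1}$; your plan of proving equalities directly is equally fine.)

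Where you genuinely diverge is the unit-multiplication step. The paper does not compute anything there: it simply invokes \cite[Corollary 2.20]{Le2024} to conclude $\mathcal{J}_2(uH)=\mathcal{J}_2(H)$, so that step is outsourced to the Fitting-ideal machinery the introduction promises to avoid. Your Leibniz-rule computation is a legitimate and fully self-contained replacement, and it works out even more cleanly than you anticipate: writing $g_i=uf_i+(\partial_iu)F$ and $g_{ik}\equiv uf_{ik}+(\partial_ku)f_i+(\partial_iu)f_k \pmod{(F)}$ and substituting into $Q_{ij;kl}(uF)$, every correction term cancels identically by the antisymmetry of $Q$ in $i\leftrightarrow j$ and $k\leftrightarrow l$, giving $Q_{ij;kl}(uF)\equiv u^3Q_{ij;kl}(F)\pmod{(F)}$ exactly --- no absorption into powers of $\mathcal{J}_1$ is needed for this step (that absorption is only needed for the coordinate-change step). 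Combined with $\mathcal{J}_1(uF)+(F)=\mathcal{J}_1(F)+(F)$ and Theorem~\ref{thm:main}, this yields $(uF,\mathcal{J}_2(uF))=(F,\mathcal{J}_2(F))$. What each approach buys: the paper's citation is shorter and leverages a result valid for all $k$, while your computation keeps the $k=2$ proof entirely elementary and independent of \cite{Le2024}, which is arguably more in the spirit of the paper's stated goal.
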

\begin{proof}
Let $F$ be a hypersurface in $\mathbb{C}^n$ with an isolated singularity at $0$, $u$ a unit in $\mathcal{O}_n$, and $\phi: (\mathbb{C}^n, 0) \to (\mathbb{C}^n, 0)$ an isomorphic germ. Let $G:=u\cdot(F\circ\phi)$. Since $u$ is a unit, as ideals, $(G)=(F\circ\phi)$. 
By \cite[Corollary 2.20]{Le2024},
\[
\mathcal{J}_2(G) = \mathcal{J}_2(F\circ \phi).
\] 
Consequently,
\[
\mathcal{O}_n/(G, \mathcal{J}_2(G)) = \mathcal{O}_n/(F\circ\phi, \mathcal{J}_2(F\circ \phi)).
\]

By the chain rule of Jacobian (see \cite[Lemma 3.3]{Ramirez2024}), it follows that 
\[
\mathcal{J}_1(F\circ \phi)\subseteq \phi^*\mathcal{J}_1(F).
\]

By the characterization of $Q_{ij;kl}(F)$ using Kronecker product (\cite[Lemma 2.6]{Ramirez2024}) and the chain rule of Hessian (\cite[Lemma 3.5]{Ramirez2024}), it follows that, for any $1\le i,j,k,l\le n$,
\[
\begin{aligned}
Q_{ij;kl}(F \circ \phi) = & \Vectorization\Big(\Hess(F\circ \phi)\Big)^T \cdot E_{ij}\otimes E_{kl} \cdot \Vectorization\Big(\Jac(F\circ\phi)^T\Jac(F\circ \phi)\Big)\\
= & \Vectorization\Big(\Jac(\phi)^T \phi^*\Hess(F)\Jac(\phi)\Big)^T \cdot E_{ij}\otimes E_{kl} \cdot \Vectorization\Big(\Jac(\phi)^T \phi^*\Big(\Jac(F)^T\Jac(F)\Big) \Jac(\phi)\Big)\\
& + \Vectorization\Big(\sum_{k=1}^n(\phi^*f_k) \Hess(\phi_k)\Big)^T \cdot E_{ij}\otimes E_{kl} \cdot \Vectorization\Big(\Jac(\phi)^T \phi^*\Big(\Jac(F)^T\Jac(F)\Big) \Jac(\phi)\Big),
\end{aligned}
\]
where the first term is in $\phi^*\mathcal{Q}$ and the second terms is in $\phi^*(\mathcal{J}_1(F)^3)$. Therefore,
\[
\mathcal{Q}(F\circ\phi) \subseteq \phi^*\big(\mathcal{Q}(F) + \mathcal{J}_1(F)^3\big).
\]
By Theorem \ref{thm:main}, it follows that
\[\mathcal{J}_2(F\circ\phi)=\mathcal{J}_1(F\circ\phi)^{n+1} + \mathcal{J}_1(F\circ\phi)^{n-2}\mathcal{Q}(F\circ\phi)\subseteq \phi^*(\mathcal{J}_1(F)^{n+1} + \mathcal{J}_1(F)^{n-2}\mathcal{Q}(F)) = \phi^*\mathcal{J}_2(F).
\]

Let $\varphi=\phi^{-1}$ and $v=u^{-1}$. Then $F=v\cdot(G\circ\varphi)$ which implies
$(F) = (G\circ\varphi)$ as ideals. Similarly, we get
\[\mathcal{J}_2(F) = \mathcal{J}_2(G\circ\varphi)\]
and 
\[
\mathcal{J}_2(G\circ\varphi)\subseteq \varphi^*\mathcal{J}_2(G).
\]
It follows that
\[
(F\circ\phi, \mathcal{J}_2(F\circ\phi)) \subseteq \phi^*(F, \mathcal{J}_2(F)) = \phi^*(G\circ\varphi, \mathcal{J}_2(G\circ\varphi)) \subseteq \phi^*\varphi^*(G, \mathcal{J}_2(G)) = (G, \mathcal{J}_2(G)) = (F\circ\phi, \mathcal{J}_2(F\circ\phi)).
\]
Consequently,
\[
(F\circ\phi, \mathcal{J}_2(F\circ\phi)) = \phi^*(F, \mathcal{J}_2(F)).
\]
Since $\phi$ is an isomorphism, then $\phi^*: \mathcal{O}_n/(F, \mathcal{J}_2(F)) \to \mathcal{O}_n/\phi^*(F, \mathcal{J}_2(F))$ is an isomorphism.

We then have an isomorphism
\[ \mathcal{O}_n/(F, \mathcal{J}_2(F)) \overset{\sim}{\to} \mathcal{O}_n/\phi^*(F, \mathcal{J}_2(F)) = \mathcal{O}_n/(F\circ\phi, \mathcal{J}_2(F\circ \phi)) = \mathcal{O}_n/(G, \mathcal{J}_2(G)).\]
\end{proof}

\end{document}